\newtheorem{theorem}{Theorem}[section]
\newtheorem{lemma}[theorem]{Lemma}
\newtheorem{proposition}[theorem]{Proposition}
\newtheorem{corollary}[theorem]{Corollary}
\theoremstyle{definition}
\newtheorem{example}[theorem]{Example}
\theoremstyle{remark}
\newcommand{\cp}{\mathop{\operator@font cp}}
\newcommand{\range}{\mathop{\operator@font range}}
\newcommand{\rank}{\mathop{\operator@font rank}}
\newcommand{\dom}{\mathop{\operator@font dom}}
\newcommand{\Real}{\mathop{\operator@font Re}}
\newcommand{\cont}{\mathop{\operator@font Cont_w}}
\newcommand{\alg}{\mathop{\operator@font Alg\mathcal{N}}}
\newcommand{\tr}{\mathop{\operator@font tr}}
\newcommand{\sgn}{\mathop{\operator@font sgn}}
\newcommand{\kn}{\mathop{\operator@font \mathcal{K}(\mathcal{N})}}
\newcommand{\scr}{\mathop{\operator@font C_0(X)\times_{\phi}\mathbb{Z}_+}}
\newcommand{\rad}{\mathop{\operator@font Rad}}
\newcommand{\hrad}{\mathop{\operator@font HRad}}
\newcommand{\D}{\mathrm D}
\newcommand{\hc}{\mathop{\operator@font \mathcal{R}_{hc}(\mathcal{A})}}
\begin{document}

\title[Compact multiplication operators ]{Compact multiplication operators  on semicrossed products}

\author{G. Andreolas}
\author{ M. Anoussis}
\author{C. Magiatis}
%\date{\today}
\address{Department of Mathematics, University of the Aegean, 832\,00
Karlovassi, Samos, Greece}
\email{gandreolas@aegean.gr}
\address{Department of Mathematics, University of the Aegean, 832\,00
Karlovassi, Samos, Greece}
\email{mano@aegean.gr}
\address{Department of Mathematics, University of the Aegean, 832\,00
Karlovassi, Samos, Greece}
\email{chmagiatis@aegean.gr}

\subjclass[2010]{Primary 47L65; Secondary 47B07.}
\keywords{Semicrossed products, non-selfadjoint Operator Algebras, multiplication operators, compact elements, recurrent points, wandering points, equicontinuity.}

\begin{abstract}
We characterize  the compact multiplication operators on a semicrossed product $C_0(X)\times_\phi\mathbb Z_+$ 
 in terms 
of the corresponding dynamical system.
We also 
characterize the compact elements of this algebra  and determine the ideal they 
 generate.

\end{abstract}

\maketitle

%%%%%%%%%%%%%%%%%%%%%%%%%%%%%%%%%%%%%%%%%%%%%%%%%%%%%%%%%%%%%%%%%%%%%%%%%%%%%%%%%%%%%%%%%%%%%%%%%%%%%%%%%%%%%%%%%
\section{introduction}

Let $\mathcal{A}$ be a Banach algebra and  $a,b\in\mathcal A$. 
The map $M_{a,b}:\mathcal A\rightarrow\mathcal A$ given by $M_{a,b}(x)=axb$  is  called  a \emph{multiplication operator}. 
 Properties of compact
multiplication operators
have been investigated since 1964 when Vala published his work ``On compact sets of compact operators'' \cite{vala64}. Let
$\mathcal{X}$ be a normed space and $\mathcal{B}(\mathcal{X})$ the algebra of all bounded linear maps from $\mathcal{X}$ into
$\mathcal{X}$. Vala proved that a non-zero multiplication operator
$M_{a,b}:\mathcal{B}(\mathcal{X})\rightarrow\mathcal{B}(\mathcal{X})$
is
compact if and only if the operators $a, b \in\mathcal{B}(\mathcal{X})$  are both
compact.
Also, in \cite{vala67} Vala defines an element $a$ of a normed algebra to be \emph{compact} 
if the mapping $x\mapsto axa$ is compact.
 This concept enabled  the study of compactness properties of  elements of abstract normed algebras.
 Ylinen in \cite{1972} studied compact elements  for abstract C*-algebras and 
  showed that  $a$ is a compact element of a $C^*$-algebra $\mathcal{A}$  if and only if there  
 exists an isometric $*$-representation $\pi$ of   $\mathcal{A}$ on a Hilbert
space $\mathcal H$ such
that the operator $\pi(a)$ is compact. 

Compactness questions  have also been considered in the more general framework 
of elementary operators.  A map
$\Phi:\mathcal{A}\rightarrow\mathcal{A}$, where $\mathcal A$ is a Banach algebra, is called \textit{elementary} if $\Phi=\sum_{i=1}^m M_{a_i,b_i}$ for some
$a_i,b_i\in\mathcal{A}$, $i=1,\ldots,m$. Fong and Sourour showed that an elementary operator
$\Phi:\mathcal{B}(\mathcal H)\rightarrow\mathcal{B}(\mathcal H)$, where $\mathcal{B}(\mathcal H)$ is the algebra of bounded linear operators
on a Hilbert space $\mathcal H$, is compact if and only if there exist compact operators
$c_i,d_i\in\mathcal{B}(\mathcal H)$, $i=1,\ldots,m$ such that $\Phi=\sum_{i=1}^m M_{c_i,d_i}$ \cite{fs}. This result was expanded by
Mathieu on prime C*-algebras \cite{m} and later on general C*-algebras by Timoney \cite{tim}. 

 Akemann and Wright 
\cite{ake} characterized the weakly compact multiplication operators on  $\mathcal{B}(\mathcal H)$,
where  $\mathcal H$ is a Hilbert space.
Saksman and  Tylli \cite{st, st1}
and 
Johnson and Schechtman   \cite{js}  studied weak compactness of multiplication operators in a Banach space setting.

Moreover, strictly singular multiplication operators are studied by 
Lindstr\"{o}m, Saksman and Tylli  \cite{lst} and 
Mathieu and  Tradacete  \cite{mt}.

Compactness properties of multiplication operators on nest algebras, a class of non selfadjoint operator algebras,  
are studied by  Andreolas and  Anoussis 
in \cite{anan}.
In particular they characterized the compact multiplication operators, the compact elements and the ideal  generated by the compact
elements.

In the present paper we study multiplication operators on a  semicrossed product $C_0(X)\times_\phi\mathbb Z_+$ where $X$ 
is a locally  compact metrizable space, and $\phi: X\rightarrow X$ a homeomorhism.
We  characterize the compact 
multiplication operators  in terms of 
the corresponding dynamical system. As a consequence, we obtain a characterization of the compact
elements of the semicrossed product.
We also characterize the ideal generated by the compact elements. 

We would like to note that the equicontinuity condition appearing in the characterization of 
the compact multiplication operators on the semicrossed product, follows from the other conditions 
if $X$ is discrete 
or has no isolated points. However,   
in the general case this does not hold and thus   
 the proof is considerably more elaborated.

\section{Compact multiplication operators on semicrossed products}

Throughout this paper, $X$ will be a locally compact metrisable space and $\phi:X\rightarrow X$ a homeomorphism. The pair $(X, \phi)$ 
is called a dynamical system.
 An action 
of $\mathbb{Z}_+$ on $C_0(X)$ by isometric $*$-automorphisms $\alpha_n$, $n\in\mathbb{Z}_+$ is obtained by defining 
$\alpha_n(f)=f\circ\phi^n$.
We write the elements of the Banach space $\ell^1(\mathbb 
Z_+,C_0(X))$ as formal series $A=\sum_{n\in\mathbb Z_+}U^nf_n$ with the norm given by 
$\|A\|_1=\sum\|f_n\|_{C_0(X)}$.  The multiplication on $\ell^1(\mathbb Z_+,C_0(X))$ is 
defined by setting
$$U^nfU^mg=U^{n+m}(\alpha^m(f)g)$$
and extending by linearity and continuity. With this multiplication, $\ell^1(\mathbb Z_+,C_0(X))$ is a Banach algebra.

The Banach algebra $\ell^1(\mathbb Z_+,C_0(X))$ can be faithfully represented as a (concrete) operator algebra on a 
Hilbert space. This is achieved by assuming a faithful action of $C_0(X)$ on a Hilbert space 
$\mathcal{H}_0$. Then, we can define a faithful contractive representation $\pi$ of $\ell_1(\mathbb Z_+,C_0(X))$ on the Hilbert 
space $\mathcal H=\mathcal{H}_0\otimes \ell^2(\mathbb Z_+)$ by defining $\pi(U^nf)$ as
$$\pi(U^nf)(\xi\otimes e_k)=\alpha^k(f)\xi\otimes e_{k+n}.$$
The \emph{semicrossed product} $C_0(X)\times_{\phi}\mathbb Z_+$ is the closure of the image of 
$\ell^1(\mathbb Z_+,C_0(X))$ in $\mathcal{B(H)}$ in the representation just defined, where $\mathcal{B(H)}$ is the algebra of bounded linear operators on $\mathcal{H}$. We will denote the semicrossed product $C_0(X)\times_{\phi}\mathbb Z_+$ by $\mathcal{A}$ and an element $\pi(U^nf)$ of $\mathcal{A}$ by $U^nf$ to simplify the notation. The closed unit ball of $\mathcal{A}$ will be denoted by $\mathcal{A}_1$. We refer to \cite{dfk} and \cite{dkm}, for more information about the semicrossed product.

For $A=\sum_{n\in\mathbb Z_+}U^nf_n\in \ell^1(\mathbb Z_+,C_0(X))$ we call $f_n\equiv 
E_n(A)$ the $n$th \emph{Fourier coefficient} of $A$. The maps $E_n:\ell^1(\mathbb N_+,C_0(X))\rightarrow C_0(X)$ are contractive in the (operator) norm of $\mathcal A$, and therefore they extend to
contractions $E_n:\mathcal A \rightarrow C_0 (X)$.
Let $A\in\mathcal A$. If the set $\{m\in\mathbb Z_+:E_m(A)\neq0\}$ is finite, then $A$ is called a \emph{polynomial}. If there exists a unique $m\in\mathbb Z_+$, such that $E_m(A)\neq0$, then $A$ is called \emph{monomial}.

%%%%%%%%%%%%%%%%%%%%%%%%%%%%%%%%%%%%%%%%%%%%%%%%%%%%%%%%%%%%%%%%%%%%%%%%%%%%%%%%%%%%%%%%%%%%%%%%%%%%%%%

Let $(X,\phi)$ be a dynamical system. Then, a point $x\in X$ is called \textit{recurrent} if 
there exists a strictly increasing sequence $(n_k)_{k\in\mathbb{N}}\subseteq \mathbb{N}$, such 
that $\lim_{k\rightarrow\infty}\phi^{n_k}(x)=x$. The set of the recurrent points of $(X,\phi)$ will be denoted by $X_r$.
We will denote by 
 $X_i$ the set of the isolated points of $X$, by
 $X_a$ the set of the accumulation points of $X$ and we set
 $X_{a,i}=\{x\in X_{a}:\exists (x_j)\subseteq X_i,\ \lim_{j\rightarrow\infty}x_j= x\}$. If $f\in C_0(X)$, we set
 $\mathrm \D(f)=\{x\in X:|f(x)|>0\}$.

%%%%%%%%%%%%%%%%%%%%%%%%%%%%%%%%%%%%%%%%%%%%%%%%%%%%%%%%%%%%%%

\begin{lemma}\label{cmo1}
Let $M_{A,B}:\mathcal A\rightarrow \mathcal A$ be a 
compact multiplication operator, where  $A,B\in\mathcal A_1$ and  $E_m(A)=f_m$, $E_m(B)=g_m$, 
for all $m\in\mathbb Z_+$. Then, $(f_m\circ\phi^{n+l}g_n)(X_a)=\{0\}$, for all $m, n, l \in\mathbb{Z_+}$.
\end{lemma}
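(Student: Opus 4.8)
The plan is to fix $m_0,n_0,l_0\in\mathbb Z_+$ and a point $x_0\in X_a$ and to prove $(f_{m_0}\circ\phi^{n_0+l_0}g_{n_0})(x_0)=0$, splitting into two cases according to whether the orbit $\{\phi^k(x_0):k\in\mathbb Z\}$ is infinite or finite. The computation common to both cases is that, for $h\in C_0(X)$ and $p,l\in\mathbb Z_+$, the multiplication rule $U^af_1\cdot U^bf_2=U^{a+b}(f_1\circ\phi^b\cdot f_2)$ together with contractivity of the $E_p$ on $\mathcal A$ gives
\[
E_p\big(M_{A,B}(U^{l}h)\big)=\sum_{\substack{m,n\ge0\\ m+n+l=p}}(f_m\circ\phi^{l+n})(h\circ\phi^n)g_n .
\]
Write $p_0=m_0+n_0+l_0$. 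I will also use that $\phi(X_a)=X_a$, so $\phi^i(x_0)\in X_a$ for every $i$, and that $\|f_m\|,\|g_n\|\le1$ for all $m,n$ since $A,B\in\mathcal A_1$.

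First suppose the orbit of $x_0$ is infinite, so $\phi^0(x_0),\dots,\phi^{m_0+n_0}(x_0)$ are pairwise distinct. I would pick $h_j\in C_0(X)$ (using local compactness) with $0\le h_j\le1$, $h_j(\phi^{n_0}(x_0))=1$ and $\operatorname{supp}h_j$ contained in a ball about $\phi^{n_0}(x_0)$ of radius $\varepsilon_j\to0$; then $U^{l_0}h_j\in\mathcal A_1$, so by compactness of $M_{A,B}$ some subsequence $Z_{j_k}:=M_{A,B}(U^{l_0}h_{j_k})$ converges in $\mathcal A$, say to $Z$. By the displayed formula $E_{p_0}(Z_j)$ is supported in $\bigcup_{n=0}^{m_0+n_0}\phi^{-n}(\operatorname{supp}h_j)$, a set shrinking to the finite set $F=\{\phi^{n_0-n}(x_0):0\le n\le m_0+n_0\}\subseteq X_a$; since $E_{p_0}(Z_{j_k})\to E_{p_0}(Z)$ uniformly, $E_{p_0}(Z)$ vanishes off $F$, and being continuous with every point of $F$ an accumulation point, $E_{p_0}(Z)\equiv0$. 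Evaluating $E_{p_0}(Z_{j_k})$ at $x_0$: for $k$ large every term with $n\ne n_0$ drops out (the corresponding $\phi^n(x_0)$ stay bounded away from $\phi^{n_0}(x_0)$), leaving $E_{p_0}(Z_{j_k})(x_0)=f_{m_0}(\phi^{n_0+l_0}(x_0))g_{n_0}(x_0)$, which must equal $E_{p_0}(Z)(x_0)=0$.

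The harder case is when the orbit of $x_0$ is finite, of period $d\ge1$: distinct iterates of $x_0$ now coincide, the support-shrinking argument collapses, and this is exactly the kind of isolated-versus-accumulation difficulty flagged in the introduction. Here I would instead argue analytically. For $h\in C_0(X)$ with $\|h\|\le1$ and $k,K\in\mathbb Z_+$ set $\psi_{k,K}=E_{k+K}(M_{A,B}(U^{k}h))=\sum_{m+n=K}(f_m\circ\phi^{k+n})(h\circ\phi^n)g_n$. If $\|\psi_{k,K}\|_{C_0(X)}\not\to0$, one extracts $k_1<k_2<\cdots$ with $k_{i+1}-k_i>K$ and $\|\psi_{k_i,K}\|\ge\delta>0$; since the $U^{k_i+K}$-Fourier coefficient of $M_{A,B}(U^{k_j}h)$ vanishes for $j>i$, the points $M_{A,B}(U^{k_i}h)$ would be $\delta$-separated, contradicting compactness of $M_{A,B}$, so $\|\psi_{k,K}\|\to0$. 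Evaluating at $x_0$ along $k\equiv s\pmod d$ makes $\psi_{k,K}(x_0)$ independent of $k$ (because $\phi^{k+n}(x_0)=\phi^{(s+n)\bmod d}(x_0)$), hence equal to $0$; prescribing the finitely many values of $h$ on the $d$-point orbit of $x_0$ and using linearity in them yields, for all $K\ge0$ and all residues $r,r'$ modulo $d$,
\[
\sum_{\substack{m+n=K\\ n\equiv r\pmod d}}f_m(\phi^{r'}(x_0))\,g_n(x_0)=0 .
\]
As $K$ varies, the left-hand side is precisely the sequence of Taylor coefficients of $F_{r'}(z)G_r(z)$, where $F_{r'}(z)=\sum_{m\ge0}f_m(\phi^{r'}(x_0))z^m$ and $G_r(z)=\sum_{t\ge0}g_{r+td}(x_0)z^{td}$ are analytic on $\{|z|<1\}$ (their coefficients are bounded by $1$). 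Hence $F_{r'}G_r\equiv0$ on the disc, so $F_{r'}\equiv0$ or $G_r\equiv0$; taking $r=n_0\bmod d$ and $r'=(n_0+l_0)\bmod d$ gives $f_{m_0}(\phi^{n_0+l_0}(x_0))=0$ or $g_{n_0}(x_0)=0$, and in either case $(f_{m_0}\circ\phi^{n_0+l_0}g_{n_0})(x_0)=0$, which completes the proof.
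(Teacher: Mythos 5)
Your proof is correct, but it follows a genuinely different route from the paper's. The paper argues by contradiction with a triple minimality device: it chooses $n_0$, $m_0$, $l_0$ minimal among the offending indices, which forces the cross terms $J(x)$ to be small near $x_0$, and then uses the fact that $x_0$ is an accumulation point to manufacture infinitely many disjointly supported bump functions $h_i$; the images $M_{A,B}(U^{l_0}h_i\circ\phi^{-n_0})$ are then uniformly separated, contradicting compactness. You instead work directly at a fixed $(m_0,n_0,l_0,x_0)$ and split on whether the orbit of $x_0$ is infinite or periodic. In the infinite-orbit case you kill the cross terms by shrinking the support of the test function to $\{\phi^{n_0}(x_0)\}$ (which is separated from the other finitely many iterates), and you extract the conclusion from the limit $Z$ of a convergent subsequence, using that a continuous function vanishing off a finite subset of $X_a$ vanishes identically; in the periodic case you use the same ``Fourier coefficients must eventually be small'' separation trick that the paper uses in its Lemma~2.2, exploit periodicity to turn a limit into an exact identity, and then disentangle the convolution via generating functions and the fact that analytic functions on the disc form an integral domain. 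What the paper's approach buys is a single uniform argument with no case analysis (the minimality induction handles coinciding iterates automatically); what yours buys is that each case is conceptually transparent, no minimality bookkeeping is needed, and the periodic case actually proves the stronger statement that $(f_m\circ\phi^{n+l}g_n)$ vanishes at every periodic point, accumulation point or not, which is part of the paper's Lemma~2.3. Two cosmetic points: in the periodic case you should say explicitly that $d$ is the \emph{minimal} period, so that the $d$ orbit points are distinct and the values of $h$ on them can be prescribed independently; and your $G_r(z)=\sum_t g_{r+td}(x_0)z^{td}$ differs from the natural choice $\sum_t g_{r+td}(x_0)z^{r+td}$ by the unit $z^r$, which shifts the indexing of the Taylor coefficients but does not affect the conclusion.
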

\begin{proof}
We suppose that there exist $m, n, l\in\mathbb Z_+$ such that
$\left(f_{m}\circ\phi^{n+l}g_{n}\right)(X_a)\neq \{0\}.
$ We define the following indices.
\begin{eqnarray*}
n_0 & = & \min\left\{n\in\mathbb Z_+|\;\exists\; m,l\in\mathbb Z_+:\left(f_m\circ\phi^{n+l}g_n\right)(X_a)\neq \{0\}\right\},\\
m_0 & = & \min\left\{m\in\mathbb Z_+|\;\exists\; l\in\mathbb Z_+:\left(f_m\circ\phi^{n_0+l}g_{n_0}\right)(X_a)\neq \{0\}\right\},\\
l_0 & = & \min\left\{l\in\mathbb Z_+|\left(f_{m_0}\circ\phi^{n_0+l}g_{n_0}\right)(X_a)\neq \{0\}\right\}.
\end{eqnarray*}
Let $x_0\in X_a$, such that $\left(f_{m_0}\circ\phi^{n_0+l_0}g_{n_0}\right)(x_0)\neq 0$.
Then, there exist an $\epsilon>0$ and an open neighborhood $U_0$ of $x_0$ such that $\left|\left(f_{m_0}\circ\phi^{n_0+l_0}g_{n_0}\right)(x)\right|>2\epsilon$, for all $x\in U_0$. Now, we consider the quantity
\begin{eqnarray*}
\bar g=\inf_{x\in U_0}\{|g_{n_0}(x)|\}>0.
\end{eqnarray*}
 We consider  the function $J:X\rightarrow \mathbb{C}$, defined as follows.
\begin{eqnarray*}
J(x)=\left\{\begin{matrix} 0& \hspace{-1em}, & \mathrm {if}\;\; m_0+n_0=0 \\ 
\displaystyle \sum_{n=0,n\neq n_0}^{n_0+m_0}\left|(f_{m_0+n_0-n}\circ\phi^{n+l_0}g_ng_{n_0})(x)\right| & \hspace{-1em}, &\mathrm{if}\;\;  m_0+n_0>0. \end{matrix} \right.
\end{eqnarray*}
If $m_0+n_0>0$, we claim that $J(x_0) =  0$.
Indeed, if  $n<n_0$, then it follows from the definition of $n_0$ that $(f_{m_0+n_0-n}\circ\phi^{n+l_0}g_n)(X_a)=\{0\}$. Otherwise, 
if $n>n_0$ and $n\leq m_0+n_0$, it follows from the definition of $m_0$ that $(f_{m_0+n_0-n}\circ\phi^{l_0+n}g_{n_0})(X_a)=\{0\}$, since $m_0+n_0-n<m_0$. Therefore, there exists an open neighborhood $V_0$ of $x_0$, such that
$J(x) < \epsilon\bar g$, for all $x\in V_0,$ by the continuity of $J$.

If $m_0+n_0>0$, we set $W_0=U_0\cap V_0$, otherwise, we set $W_0=U_0$.
Since $x_0\in W_0\cap X_a$, there exist a sequence of points $\{x_i\}_{i\in\mathbb{N}}\subseteq W_0$, 
a sequence of open subsets $\{W_i\}_{i\in\mathbb{N}}\subset W_0$ with $x_i\in W_i$ and $W_i\cap W_j=\emptyset$, 
for $i\neq j$ and a sequence of norm one 
functions $\{h_i\}_{i\in\mathbb{N}}\subseteq C_{0}(X)$ with $\D(h_i)\subseteq W_i$ 
and $h_i(x_i)=1$, for all $i\in\mathbb{N}$.

To complete the proof, we consider the sequence $\{U^{l_0}h_i\circ\phi^{-n_0}\}_{i\in\mathbb{N}}$ and we will prove that the sequence
$\{M_{A, B}(U^{l_0}h_i\circ\phi^{-n_0})\}_{i\in\mathbb{N}}$ has no convergent subsequence.
We estimate the quantity $\|M_{A,B}\left(U^{l_0}h_u\circ\phi^{-n_0}\right)-M_{A,B}\left(U^{l_0}h_v\circ\phi^{-n_0}\right)\|_{\mathcal A}$, for $u,v\in\mathbb{N}$, $u\neq v$.
 
\begin{eqnarray*}
\left\|M_{A,B}\left(U^{l_0}h_u\circ\phi^{-n_0}\right)-M_{A,B}\left(U^{l_0}h_v\circ\phi^{-n_0}\right)\right\|_{\mathcal A} & \ge & \\ 
\left\|E_{m_0+l_0+n_0}\left(M_{A,B}\left(U^{l_0}h_u\circ\phi^{-n_0}-U^{l_0}h_v\circ\phi^{-n_0}\right)\right)\right\|_{C_0(X)} & = &\\ 
\left\|\sum_{n=0}^{m_0+n_0}f_{m_0+n_0-n}\circ\phi^{n+l_0}g_n(h_u\circ\phi^{n-n_0}-h_v\circ\phi^{n-n_0})\right\|_{C_0(X)}&\ge&\\
\left\|\sum_{n=0}^{m_0+n_0}f_{m_0+n_0-n}\circ\phi^{n+l_0}g_ng_{n_0}(h_u\circ\phi^{n-n_0}-h_v\circ\phi^{n-n_0})\right\|_{C_0(X)} & \ge &\\
\left|\sum_{n=0}^{m_0+n_0}\left(f_{m_0+n_0-n}\circ\phi^{n+l_0}g_ng_{n_0}(h_u\circ\phi^{n-n_0}-h_v\circ\phi^{n-n_0})\right)(x_u)\right| & \ge &\\
\left|\left(f_{m_0}\circ\phi^{n_0+l_0}g^2_{n_0}\right)(x_u)\right|&  &\\
-\sum_{n=0,n\neq n_0}^{m_0+n_0}\left|\left(f_{m_0+n_0-n}\circ\phi^{n+l_0}g_ng_{n_0}(h_u\circ\phi^{n-n_0}-h_v\circ\phi^{n-n_0})\right)(x_u)\right|.
\end{eqnarray*}
We note that $|(h_u\circ\phi^{n-n_0}-h_v\circ\phi^{n-n_0})(x_u)|\leq 1$, since  $\D(h_u)\cap \D(h_v)=\emptyset$. Therefore, we obtain
\begin{eqnarray*}
\left\|M_{A,B}\left(U^{l_0}h_u\circ\phi^{-n_0}\right)-M_{A,B}\left(U^{l_0}h_v\circ\phi^{-n_0}\right)\right\|_{\mathcal A} & \ge & \\ 
\left|\left(f_{m_0}\circ\phi^{n_0+l_0}g^2_{n_0}\right)(x_u)\right|-\sum_{n=0,n\neq n_0}^{m_0+n_0}\left|(f_{m_0+n_0-n}\circ\phi^{n+l_0}g_ng_{n_0})(x_u)\right| & = &\\
\left|\left(f_{m_0}\circ\phi^{n_0+l_0}g^2_{n_0}\right)(x_u)\right|-J(x_u) & > & \epsilon\bar g,
\end{eqnarray*}
which concludes the proof.
\end{proof}

%%%%%%%%%%%%%%%%%%%%%%%%%%%%%%%%%%%%%%%%%%%%%%%%%%%%%%%%%%%%%%%%%%%%%%%%%%%%%%%%%%%%%%%%%%%%%%%%%%%%%%%%%%

%%%%%%%%%%%%%%%%%%%%%%%%%%%%%%%%%%%%%%%%%%%%%%%%%%%%%%%%%%%%%%%%%%%%%%%%%%%%%%%%%%%%%%%%%%%%%%%%%%%%%%%%%%

\begin{lemma}\label{cmo2}
Let $M_{A,B}:\mathcal A\rightarrow \mathcal A$ be a compact multiplication operator, where  $A,B\in \mathcal A_1$ and  $E_m(A)=f_m$, $E_m(B)=g_m$, for all $m\in\mathbb Z_+$.
Then, $\lim_{l\rightarrow\infty}(f_m\circ\phi^{n+l}g_n)(x)= 0$, for all  $m,n\in\mathbb Z_+$ and $x\in X_i$.
\end{lemma}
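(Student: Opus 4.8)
The plan is to argue by contradiction: assuming the statement fails, I will build a bounded sequence in $\mathcal A$ whose image under $M_{A,B}$ is separated, contradicting compactness. So suppose that, after passing to a subsequence, there are $m,n\in\mathbb Z_+$, a point $x\in X_i$, an $\epsilon_0>0$ and $l_k\uparrow\infty$ with $|f_m(\phi^{n+l_k}(x))\,g_n(x)|\ge\epsilon_0$ for all $k$; in particular $g_n(x)\neq0$. Since $x$ is isolated and points are closed in a metric space, each $\phi^j(x)$ is isolated and $e_{\phi^j(x)}:=\chi_{\{\phi^j(x)\}}$ lies in $C_0(X)$ with norm $1$, so $U^c e_{\phi^j(x)}\in\mathcal A_1$ for every $c\in\mathbb Z_+$. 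The computation I will use throughout is that, from the multiplication rule and $e_y\circ\phi^{b}=e_{\phi^{-b}(y)}$,
\[
E_p\big(A\,(U^c e_y)\,B\big)=\sum_{\substack{m'+n'=p-c\\ m',n'\ge0}}(f_{m'}\circ\phi^{c+n'})\,e_{\phi^{-n'}(y)}\,g_{n'},
\]
a finite sum of functions supported on the orbit of $y$; hence evaluating $E_p$ at a point of that orbit retains only the indices $n'$ with $\phi^{-n'}(y)$ equal to that point. Also $\|M_{A,B}T\|_{\mathcal A}\ge\|E_p(M_{A,B}T)\|_{C_0(X)}$, and $E_p(A\,U^c(\cdot)\,B)=0$ whenever $c>p$.

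If $x$ is \emph{not} periodic then all $\phi^i(x)$ are distinct, and the argument is clean: with $y=\phi^n(x)$, $T_k=U^{l_k}e_{\phi^n(x)}$ and $p=m+n+l_k$, the point $\phi^{-n'}(y)=\phi^{n-n'}(x)$ equals $x$ only for $n'=n$, so evaluation of the displayed sum at $x$ collapses to $f_m(\phi^{n+l_k}(x))g_n(x)$, whence $\|M_{A,B}T_k\|_{\mathcal A}\ge\epsilon_0$. Since $M_{A,B}T_k$ vanishes at all Fourier levels $<l_k$, passing to a subsequence with $l_{k_{i+1}}>m+n+l_{k_i}$ gives $\|M_{A,B}T_{k_i}-M_{A,B}T_{k_j}\|_{\mathcal A}\ge\epsilon_0$ for $i<j$ (evaluate $E_{m+n+l_{k_i}}$ of the difference), an $\epsilon_0$-separated sequence in $M_{A,B}(\mathcal A_1)$ — the required contradiction.

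The main obstacle is the case that $x$ is periodic, say of minimal period $q$; then the orbit $\{x_j:=\phi^j(x)\}$ is finite, the functions $e_{\phi^{-n'}(y)}$ recur cyclically in $n'$, and evaluation at an orbit point retains a full residue class mod $q$ of indices, which may cancel, so that no single test element and level isolates $f_m$. Here the failure of the limit means the $q$-periodic sequence $l\mapsto f_m(\phi^{n+l}(x))g_n(x)$ is not identically $0$, so $f_m(x_a)\neq0$ for $a=(n+l)\bmod q$ and some $l$. I would let $n_0$ be the least index with $n_0\equiv n\ (\mathrm{mod}\ q)$ and $g_{n_0}(x)\neq0$ (which exists since $g_n(x)\neq0$) and prove by induction on $k\ge0$ that $f_k(x_a)=0$. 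For the inductive step, take $T_r=U^{rq+s}e_{x_j}$ with $j\equiv n_0$ and $s\equiv a-n_0\ (\mathrm{mod}\ q)$ and level $p=rq+s+n_0+k$; evaluating the displayed formula at $x$ then gives a sum over the indices $n'\equiv n_0\ (\mathrm{mod}\ q)$, $0\le n'\le n_0+k$, with $n'$-term $f_{n_0+k-n'}(x_a)\,g_{n'}(x)$. Every term with $n'<n_0$ vanishes by minimality of $n_0$, and every term with $n'>n_0$ vanishes by the inductive hypothesis (its $f$-index $n_0+k-n'$ is $<k$), so only the term $n'=n_0$ survives, giving the value $f_k(x_a)\,g_{n_0}(x)$, which is independent of $r$. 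Were this nonzero, then $\|M_{A,B}T_r\|_{\mathcal A}\ge|f_k(x_a)g_{n_0}(x)|$ for all $r$ while $M_{A,B}T_r$ vanishes at Fourier levels $<rq+s$, so a sufficiently sparse subsequence of $(M_{A,B}T_r)$ is again separated, contradicting compactness; hence $f_k(x_a)=0$. Taking $k=m$ contradicts $f_m(x_a)\neq0$, which finishes the proof.
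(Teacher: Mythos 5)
Your argument is correct, and it reaches the conclusion by a route that differs in a substantive way from the paper's. Both proofs rest on the same engine: test $M_{A,B}$ against norm-one monomials $U^{l}\chi_{\{pt\}}$ supported at a single isolated point of the orbit of $x$, read off one Fourier coefficient of the image via the convolution formula, and use a sparsely chosen sequence of exponents $l$ so that distinct images live at disjoint Fourier levels and are therefore $\epsilon$-separated. The difference is in how the cross-terms $n'\neq n$ in that Fourier coefficient are killed. The paper does a double minimization over \emph{all} failing triples (first the minimal $B$-index $n_0$, then the minimal $A$-index $m_0$), which forces the sum $J(l)$ of cross-terms to tend to $0$ and treats periodic and non-periodic isolated points uniformly. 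You instead split on whether $x$ is periodic: in the non-periodic case the point evaluation picks out exactly the term $n'=n$ and there are no cross-terms at all, which is cleaner than the paper's estimate; in the periodic case, where evaluation retains a whole residue class of indices mod the period, you run an induction on the Fourier index of $A$ combined with minimality of the $B$-index within that residue class. Your periodic case is somewhat longer than the paper's uniform treatment but yields the stronger intermediate fact that $f_k(x_a)=0$ for \emph{every} $k$, which is essentially the content of the paper's Lemma~\ref{rl2} restricted to periodic isolated points. Two small points you should make explicit in a final write-up: the convolution formula $E_p(A\,U^c e_y\,B)=\sum_{m'+n'=p-c}(f_{m'}\circ\phi^{c+n'})e_{\phi^{-n'}(y)}g_{n'}$ for general (non-polynomial) $A,B\in\mathcal A$ requires a word of justification (approximate by Ces\`aro means and use continuity of $E_p$; the paper uses the same fact silently), and in the periodic case the "limit fails" hypothesis should be read as "the $q$-periodic sequence is not identically zero", which is indeed equivalent.
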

\begin{proof}
We suppose that there exist $n,m\in\mathbb Z_+$ and $x\in X_i$, such that $\lim_{l\rightarrow\infty}(f_m\circ\phi^{n+l}g_n)(x)\neq 0$. We define the following indices.
\begin{eqnarray*}
n_0 & = & \min\left\{n\in\mathbb Z_+ | \exists m\in\mathbb Z_+,\ x\in X_i:\lim_{l\rightarrow\infty}(f_m\circ\phi^{n+l}g_n)(x)\neq 0 \right\},\\
m_0 & = & \min\left\{m\in\mathbb Z_+ | \exists x\in X_i:\lim_{l\rightarrow\infty}(f_m\circ\phi^{n_0+l}g_{n_0})(x)\neq 0 \right\}.
\end{eqnarray*}
Then, there exist an element $x_0\in X_i$, an $\epsilon>0$ and a strictly increasing sequence $\{l_i\}_{i\in\mathbb{N}}\subset\mathbb Z_+$, such that
$\left|(f_{m_0}\circ\phi^{n_0+l_i}g_{n_0})(x_{0})\right| > 2\epsilon.$ We consider the sequence $\{J(l)\}_{l\in\mathbb{Z}_+}$,
\begin{eqnarray*}
J(l)=\left\{\begin{matrix} 0& \hspace{-1em}, &\mathrm{if} \;\; m_0+n_0=0 \\
\displaystyle \sum_{n=0,n\neq n_0}^{n_0+m_0}\left|(f_{m_0+n_0-n}\circ\phi^{n+l}g_ng_{n_0})(x_0)\right| & \hspace{-1em}, &\mathrm{if} \;\; m_0+n_0>0. \end{matrix} \right.
\end{eqnarray*}
If $m_0+n_0>0$, we claim that there exists an $L\in\mathbb N$, such that: 
\begin{eqnarray*}
J(l)=\sum_{n=0 ,n\neq n_0}^{n_0+m_0}\left|(f_{m_0+n_0-n}\circ\phi^{n+l}g_ng_{n_0})(x_{0})\right| <  \epsilon g_{n_0}(x_0), & \forall\; l>L.
\end{eqnarray*}
Indeed, if $n<n_0$, it follows from the definition of $n_0$, that $\lim_{l\rightarrow\infty}(f_{m_0+n_0-n}\circ\phi^{n+l}g_n)(x_{0})= 0$.
On the other hand, if $n>n_0$ and $n\leq m_0+n_0$, 
 it follows from the definition of $m_0$, that $\lim_{l\rightarrow\infty}(f_{m_0+n_0-n}\circ\phi^{n+l}g_{n_0})(x_{0})= 0$, since $m_0+n_0-n<m_0$.
We choose a subsequence $\{l_{i_k}\}_{k\in\mathbb{N}}\subseteq\{l_i\}_{i\in\mathbb{N}}\subseteq\mathbb Z_+$, such 
that $l_{i_1}>L$ and $l_{i_{k+1}}-l_{i_{k}}>m_0+n_0+1$, for all $k\in\mathbb{N}$. 
We consider the sequence $\left\{U^{l_{i_k}}\chi\right\}_{k\in\mathbb{N}}$, where $\chi$ 
is the characteristic function of the singleton $\{\phi^{n_0}(x_0)\}$. To complete the proof we will prove that the 
sequence $\left\{M_{A,B}\left(U^{l_{i_{k}}}\chi \right)\right\}_{k\in\mathbb{N}}$ has no convergent subsequence. We estimate the quantity $\left\|M_{A,B}\left(U^{l_{i_{u}}}\chi\right)-M_{A,B}\left(U^{l_{i_{v}}}\chi\right)\right\|_{\mathcal A}$, for $u,v\in\mathbb{N}$, $u< v$.
\begin{eqnarray*}
\left\|M_{A,B}\left(U^{l_{i_{u}}}\chi\right)-M_{A,B}\left(U^{l_{i_{v}}}\chi\right)\right\|_{\mathcal A} & \ge &\\
\left\|E_{m_0+n_0+l_{i_{u}}}\left(M_{A,B}\left(U^{l_{i_{u}}}\chi\right)-M_{A,B}\left(U^{l_{i_{v}}}\chi\right)\right)\right\|_{C_0(X)} & = &\\
\left\|E_{m_0+n_0+l_{i_{u}}}\left(M_{A,B}\left(U^{l_{i_{u}}}\chi\right)\right)\right\|_{C_0(X)},
\end{eqnarray*}
since by the assumption $l_{i_{v}}-l_{i_{u}}>m_0+n_0+1$, the $(m_0+n_0+l_{i_{u}}){\mathrm {th}}$  Fourier coefficient of
$M_{A,B}\left(U^{l_{i_{v}}}\chi\right)$ is $0$.
We thus obtain 
\begin{eqnarray*}
\left\|M_{A,B}\left(U^{l_{i_{u}}}\chi\right)-M_{A,B}\left(U^{l_{i_{v}}}\chi\right)\right\|_{\mathcal A} & \ge &\\
\left\|\sum_{n=0}^{m_0+n_0}f_{m_0+n_0-n}\circ\phi^{n+l_{i_{u}}}\chi\circ\phi^{n}g_n\right\|_{C_0(X)}& \ge &\\
\left\|\sum_{n=0}^{m_0+n_0}f_{m_0+n_0-n}\circ\phi^{n+l_{i_{u}}}\chi\circ\phi^{n}g_ng_{n_0}\right\|_{C_0(X)}& \ge &\\
\left|\sum_{n=0}^{m_0+n_0}(f_{m_0+n_0-n}\circ\phi^{n+l_{i_{u}}}\chi\circ\phi^{n}g_ng_{n_0})(x_0)\right|& \ge &\\
|(f_{m_0}\circ\phi^{n_0+l_{i_{u}}}g^2_{n_0})(x_{0})|-\sum_{n=0, n\neq n_0}^{m_0+n_0}|(f_{m_0+n_0-n}\circ\phi^{n+l_{i_{u}}}\chi\circ\phi^{n}g_ng_{n_0})(x_0)|& \ge &\\
\left|(f_{m_0}\circ\phi^{n_0+l_{i_{u}}}g^2_{n_0})(x_{0})\right|-J(l_{i_{u}}) & > &\\\epsilon g_{n_0}(x_0),&&
\end{eqnarray*}
which concludes the proof. 
\end{proof}

\begin{lemma}\label{rl2}
Let $M_{A,B}:\mathcal A\rightarrow \mathcal A$ be a compact multiplication operator, where  $A,B\in \mathcal A_1$ and  $E_m(A)=f_m$, $E_m(B)=g_m$, for all $m\in\mathbb Z_+$.
Then, $(f_m\circ\phi^{n+l}g_n)(x)= 0$, for all  $m,n,l\in\mathbb Z_+$ and $x\in X_r$.

\end{lemma}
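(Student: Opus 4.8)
The plan is to reduce Lemma \ref{rl2} to the two preceding lemmas by splitting a recurrent point according to whether it is an accumulation point or an isolated point of $X$. Since every point of a metric space is one or the other, we have the partition $X = X_a \cup X_i$ with $X_a \cap X_i = \emptyset$, so it suffices to fix $m, n, l \in \mathbb{Z}_+$, fix $x \in X_r$, and treat the cases $x \in X_a$ and $x \in X_i$ separately.

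If $x \in X_r \cap X_a$, there is nothing to do: Lemma \ref{cmo1} already gives $(f_m \circ \phi^{n+l} g_n)(X_a) = \{0\}$, and in particular the function vanishes at $x$.

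The substantive case is $x \in X_r \cap X_i$. First I would observe that recurrence at an isolated point forces periodicity. By definition of $X_r$ there is a strictly increasing sequence $(n_k) \subseteq \mathbb{N}$ with $\phi^{n_k}(x) \to x$; since $\{x\}$ is open in $X$, this forces $\phi^{n_k}(x) = x$ for all sufficiently large $k$, and choosing such a $k$ with $n_k \geq 1$ and setting $q = n_k$ gives $\phi^q(x) = x$. Hence the sequence $l \mapsto (f_m \circ \phi^{n+l} g_n)(x) = f_m(\phi^{n+l}(x)) \, g_n(x)$ is periodic in $l$ with period dividing $q$, because $\phi^{n+l+q}(x) = \phi^{n+l}(x)$. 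On the other hand, Lemma \ref{cmo2} says this same sequence tends to $0$ as $l \to \infty$. A periodic sequence that converges is constant and equal to its limit, so $(f_m \circ \phi^{n+l} g_n)(x) = 0$ for every $l$, completing the proof.

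There is no real obstacle here; the argument is short and only combines Lemmas \ref{cmo1} and \ref{cmo2}. The one point deserving a word of care is the step from recurrence to genuine periodicity at an isolated point (using that $\{x\}$ is open), together with the elementary fact that a convergent periodic sequence of complex numbers is constant.
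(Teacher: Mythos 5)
Your proof is correct and follows essentially the same route as the paper: the case $x\in X_a\cap X_r$ is dispatched by Lemma \ref{cmo1}, and for $x\in X_i\cap X_r$ one notes that recurrence at an isolated point forces periodicity, so the value $(f_m\circ\phi^{n+l}g_n)(x)$ is repeated along the subsequence $l+iq$, which tends to $0$ by Lemma \ref{cmo2}. Your justification of the periodicity step (using that $\{x\}$ is open) is a detail the paper leaves implicit, but the argument is the same.
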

\begin{proof}
If  $x\in X_i\cap X_r$,  there exists a $k_0\in\mathbb Z_+$ such that $\phi^{k_0}(x)=x$.
It   follows from  Lemma \ref{cmo2} that  
 $\lim_{i\rightarrow\infty}(f_m\circ\phi^{n+l+ik_0}g_n)(x)= 0$, for $m, n, l\in\mathbb Z_+$. We note that $(f_m\circ\phi^{n+l+ik_0}g_n)(x)=
(f_m\circ\phi^{n+l}g_n)(x)$, for all $ i \in \mathbb Z_+$, and hence 
$(f_m\circ\phi^{n+l}g_n)(x)=0.$ If $x \in X_a\cap X_r$, the assertion follows from Lemma \ref{cmo1}.

\end{proof}

%%%%%%%%%%%%%%%%%%%%%%%%%%%%%%%%%%%%%%%%%%%%%%%%%%%%%%%%%%%%%%%%%%%%%%%%%%%

\begin{lemma}\label{cmo3}
Let $M_{A,B}:\mathcal A\rightarrow \mathcal A$ be a compact multiplication operator, where  
$A,B\in \mathcal A_1$ and  $E_m(A)=f_m$, $E_m(B)=g_m$, for all $m\in\mathbb Z_+$. 
Then, the sequence $\{f_m\circ\phi^{n+l}g_n\}_{l\in\mathbb{Z}_+}$ is pointwise equicontinuous, for all $m,n\in\mathbb Z_+$.
\end{lemma}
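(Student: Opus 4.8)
The plan is to argue by contradiction, and the point is that Lemmas~\ref{cmo1} and~\ref{rl2} are now available, which removes the need for the minimal-index bookkeeping used earlier. Fix $m,n\in\mathbb Z_+$. Equicontinuity of a family of functions at an isolated point is automatic, since there the singleton is a neighbourhood, so it suffices to prove equicontinuity of $\{f_m\circ\phi^{n+l}g_n\}_{l\in\mathbb Z_+}$ at an arbitrary $y\in X_a$; for such a $y$, Lemma~\ref{cmo1} gives $(f_m\circ\phi^{n+l}g_n)(y)=0$ for every $l\in\mathbb Z_+$. Assume the family is not equicontinuous at $y$. Using that $X$ is metrisable, hence first countable, I would extract an $\epsilon>0$, a sequence $(x_j)_{j\in\mathbb N}$ with $x_j\to y$, and indices $l_j\in\mathbb Z_+$ with $|(f_m\circ\phi^{n+l_j}g_n)(x_j)|\ge\epsilon$ for all $j$.

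Next I would record three facts. First, each $x_j\in X_i$: otherwise $x_j\in X_a$ and Lemma~\ref{cmo1} would force $(f_m\circ\phi^{n+l_j}g_n)(x_j)=0$. Second, each $x_j$ is not periodic for $\phi$: a periodic point is recurrent, so Lemma~\ref{rl2} would again force the value at $x_j$ to vanish. Third, $l_j\to\infty$: if some value $l$ were attained infinitely often, continuity of $f_m\circ\phi^{n+l}g_n$ together with $x_j\to y$ and $(f_m\circ\phi^{n+l}g_n)(y)=0$ would contradict the lower bound $\epsilon$. Passing to a subsequence, I may assume $(l_j)$ is strictly increasing with $l_{j+1}-l_j>m+n$.

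I would then contradict compactness of $M_{A,B}$ by testing it on the bounded sequence $S_j:=U^{l_j}\bigl(\chi_{\{x_j\}}\circ\phi^{-n}\bigr)\in\mathcal A_1$, where $\chi_{\{x_j\}}$ is the characteristic function of $\{x_j\}$, which is continuous since $x_j$ is isolated (so $S_j$ is well defined with $\|S_j\|_{\mathcal A}=1$). Expanding $M_{A,B}(S_j)=AS_jB$ and applying $E_{l_j+m+n}$, exactly as in the computations of Lemmas~\ref{cmo1} and~\ref{cmo2}, gives
\[
E_{l_j+m+n}\bigl(M_{A,B}(S_j)\bigr)=\sum_{q=0}^{m+n}f_{m+n-q}\circ\phi^{l_j+q}\cdot\bigl(\chi_{\{x_j\}}\circ\phi^{q-n}\bigr)\cdot g_q .
\]
Now $\chi_{\{x_j\}}\circ\phi^{q-n}=\chi_{\{\phi^{n-q}(x_j)\}}$, whose value at $x_j$ is $1$ for $q=n$ and, by the non-periodicity of $x_j$, is $0$ for every $q\neq n$; hence the value of this Fourier coefficient at $x_j$ is exactly $(f_m\circ\phi^{n+l_j}g_n)(x_j)$, of modulus at least $\epsilon$. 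For $u<v$ the gap condition forces $l_v>l_u+m+n$, so the $(l_u+m+n)$-th Fourier coefficient of $M_{A,B}(S_v)$ vanishes, and therefore
\[
\bigl\|M_{A,B}(S_u)-M_{A,B}(S_v)\bigr\|_{\mathcal A}\ge\bigl\|E_{l_u+m+n}\bigl(M_{A,B}(S_u)\bigr)\bigr\|_{C_0(X)}\ge\epsilon .
\]
Thus $\{M_{A,B}(S_j)\}$ has no Cauchy subsequence, contradicting the compactness of $M_{A,B}$.

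Apart from these largely routine reductions, the point worth stressing is that a failure of equicontinuity can occur only at a point $y$ which is an accumulation point of \emph{isolated} points, and that the separating elements are then necessarily supported at those nearby isolated points; once the situation has been arranged this way, it is the non-periodicity of those isolated points---available here only because Lemma~\ref{rl2} is already proved---that makes every off-diagonal term of the relevant Fourier coefficient drop out, so that no minimal-index argument is needed.
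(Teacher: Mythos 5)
Your proof is correct and follows essentially the same route as the paper's: reduce to accumulation points via Lemma~\ref{cmo1}, extract a sequence of isolated, non-periodic witnesses $x_j\to y$ with unbounded $l_j$, test $M_{A,B}$ on $U^{l_j}\chi_{\{\phi^n(x_j)\}}$, and use non-periodicity to isolate the $q=n$ term of the $(l_j+m+n)$-th Fourier coefficient. The only differences are cosmetic (you justify explicitly that the $l_j$ must tend to infinity and that equicontinuity at isolated points is automatic, both of which the paper leaves implicit).
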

\begin{proof}
It follows from Lemma \ref{cmo1} that it is sufficient to prove that 
$\{f_m\circ\phi^{n+l}g_n\}_{l\in\mathbb{Z}_+}$ is pointwise equicontinuous on $X_{a, i}$.
We suppose that there exist some $n_0, m_0 \in\mathbb Z_+$ and a point $x_0\in X$, such that the sequence $\{f_{m_0}\circ\phi^{n_0+l}g_{n_0}\}_{l\in\mathbb{Z}_+}$ is not equicontinuous at $x_0$.
We note that $\left(f_{m_0}\circ\phi^{n_0+l}g_{n_0}\right)(x_0)=0$, for all $l\in\mathbb{Z}_+$, 
by Lemma \ref{cmo1}. Therefore, there exist an $\epsilon>0$, 
a strictly increasing sequence $\{l_i\}_{i\in\mathbb{N}}\subset\mathbb Z_+$ and a  
 sequence $\{x_i\}_{i\in\mathbb{N}}\subseteq X$, such that $\lim_{i\rightarrow\infty} x_i= x_0$ and
$\left|(f_{m_0}\circ\phi^{n_0+l_i}g_{n_0})(x_{i})\right|>\epsilon$, for all $\; i\in\mathbb{N}.$ We note that the inclusion $\{x_i\}_{i\in\mathbb{N}}\subseteq X_i$ holds by Lemma \ref{cmo1}.
Furthermore, we may assume that $l_{i+1}-l_i>m_0+n_0+1$, for all $i\in\mathbb{N}$. It follows from 
Lemma \ref{rl2}  that the elements $\{x_i\}_{i\in\mathbb{N}}$ are not  periodic. Therefore, if $m_0+n_0>0$, 
we have  that $\phi^{n_0}(x_i)\neq \phi^{n}(x_i)$, for all $n\in\{0,\dots,m_0+n_0\}\smallsetminus\{n_0\}$ and $i\in\mathbb{N}$.
We consider the sequence $\left\{U^{l_{i}}\chi_i\right\}_{i\in\mathbb{N}}$, where $\chi_i$ is the characteristic function of the set 
$\{\phi^{n_0}(x_i)\}$. Then, for $u< v$ we obtain,
\begin{eqnarray*}
\left\|M_{A,B}\left(U^{l_{u}}\chi_{u}\right)-M_{A,B}\left(U^{l_{v}}\chi_v\right)\right\|_{\mathcal A} & \ge &\\
\left\|E_{m_0+n_0+l_{u}}\left(M_{A,B}\left(U^{l_{u}}\chi_u\right)-M_{A,B}\left(U^{l_{v}}\chi_v\right)\right)\right\|_{C_0(X)} & = &\\
\left\|E_{m_0+n_0+l_{u}}\left(M_{A,B}\left(U^{l_{u}}\chi_u\right)\right)\right\|_{C_0(X)} & \ge &\\
\left|E_{m_0+n_0+l_{u}}\left(M_{A,B}\left(U^{l_{u}}\chi_u\right)\right)(x_{u})\right| & = &\\
\left|\sum_{n=0}^{m_0+n_0}(f_{m_0+n_0-n}\circ\phi^{n+l_{u}}\chi_{u} \circ \phi^{n}g_n)(x_{u})\right|& = &\\
\left|(f_{m_0}\circ\phi^{n_0+l_{u}}g_{n_0})(x_{u})\right| & > &\epsilon,
\end{eqnarray*}
since $x_u$ is not periodic.
Therefore, the sequence $\left\{M_{A,B}\left(U^{l_{i}}\chi_i\right)\right\}_{k\in\mathbb{N}}$ has 
no Cauchy subsequence.
\end{proof}

%%%%%%%%%%%%%%%%%%%%%%%%%%%%%%%%%%%%%%%%%%%%%%%%%%%%%%%%%%%%%%%%%%%%%%%%%%%%%%%%%%%%%%%%%%%%%%%

\begin{proposition}\label{compactelements}
Let  $m, n \in \mathbb Z_+$ and  $A=U^mf,\;B=U^ng\in\mathcal A_1$.  Then, 
the multiplication operator $M_{A,B}:\mathcal A\rightarrow \mathcal A$ is  compact if and only if the following assertions are valid.
\begin{enumerate}
\item $(f\circ\phi^{n+l}g)(X_a)=\{0\}$, for all $l\in\mathbb Z_+$,
\item $\lim_{l\rightarrow\infty}(f\circ\phi^{n+l}g)(x)= 0$, for all $x\in X_i$,
\item The sequence $\{f\circ\phi^{n+l}g\}_{l\in\mathbb{Z}_+}$ is pointwise equicontinuous.
\end{enumerate}
\end{proposition}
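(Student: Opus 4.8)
The forward implication is essentially done: for monomials $A=U^mf$, $B=U^ng$ the three conditions are exactly the special cases of Lemmas \ref{cmo1}, \ref{cmo2} and \ref{cmo3} (with the only nonzero Fourier coefficients being $f_m=f$ and $g_n=g$). So the real work is the converse: assuming (1), (2), (3), show $M_{A,B}$ is compact. The strategy is to show that $M_{A,B}$ is the norm limit of finite-rank operators, or at least that it maps $\mathcal A_1$ to a totally bounded set.

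**Key steps for the converse.** First I would compute $M_{A,B}(U^kh)=U^{m+k+n}(f\circ\phi^{k+n}\cdot h\circ\phi^n\cdot g)$ for a monomial $U^kh$, and extend by linearity; thus $M_{A,B}$ is determined by the family of maps $h\mapsto f\circ\phi^{k+n}\,(h\circ\phi^n)\,g$ on $C_0(X)$. The point is that condition (1) forces $f\circ\phi^{n+l}g$ to be supported on the isolated points, condition (2) gives decay of these functions as $l\to\infty$ along $X_i$, and condition (3) gives the equicontinuity needed to handle accumulation points of isolated points. I would then split $X$: on a neighborhood of $X\setminus X_{a,i}$ the functions $f\circ\phi^{n+l}g$ are uniformly small for $l$ large (by (1) together with continuity and vanishing at infinity), while on $X_{a,i}$ one uses (3) to show the relevant multiplication functions lie in a compact subset of $C_0(X)$. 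Concretely, I would argue that the set $K=\{f\circ\phi^{n+l}g : l\in\mathbb Z_+\}$ is relatively compact in $C_0(X)$: it is pointwise bounded, equicontinuous by (3), and "uniformly vanishing at infinity and at the non-isolated part" by (1) and (2); an Arzel\`a--Ascoli type argument for $C_0$ of a locally compact metric space then gives relative compactness.

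**Reducing to finitely many terms.** Given $\delta>0$, relative compactness of $K$ lets me find $l_0$ such that for $l\ge l_0$ the function $f\circ\phi^{n+l}g$ has norm $<\delta$ (this uses (2) and the fact that any limit point of the sequence, being in $C_0(X)$ and vanishing on $X_a$ hence on the closure of $X_i$'s limit points, must be $0$ — here is exactly where the subtlety flagged in the introduction enters, and where (3) is indispensable). Then the "tail" $\sum_{l\ge l_0}$ contributes an operator of norm $\le\delta$ in an appropriate sense, while the finitely many terms $l<l_0$ each give a multiplication operator by a fixed $C_0(X)$ function $f\circ\phi^{n+l}g$ which is supported on $X_i$ and vanishes at infinity, hence is approximable by finitely-supported (finite-rank) multiplications. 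Assembling these, $M_{A,B}$ is a norm limit of finite-rank operators on $\mathcal A$, hence compact.

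**Main obstacle.** The delicate point is precisely the one the authors highlight: controlling the behavior near points of $X_{a,i}$, i.e.\ accumulation points that are limits of isolated points. Conditions (1) and (2) alone do not prevent the functions $f\circ\phi^{n+l}g$ from developing "spikes" at isolated points $x_j\to x_0\in X_a$ with heights not tending to $0$ uniformly in $l$; it is exactly condition (3) (pointwise equicontinuity of $\{f\circ\phi^{n+l}g\}_l$) that rules this out and yields the relative compactness of $K$ in $C_0(X)$. I expect the technical heart of the proof to be the Arzel\`a--Ascoli argument establishing that relative compactness, together with the verification that the tail operator is genuinely small in the operator norm of $\mathcal A$ (which requires care since the norm on $\mathcal A$ is the concrete operator-algebra norm on $\mathcal B(\mathcal H)$, not the $\ell^1$ norm, so one estimates via Fourier coefficients and the representation $\pi$).
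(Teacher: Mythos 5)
Your forward direction is fine: it is exactly the specialization of Lemmas \ref{cmo1}, \ref{cmo2} and \ref{cmo3} to monomials, which is also what the paper does. The converse, however, has a genuine gap at the step where you split $M_{A,B}$ into a ``head'' (Fourier modes $l<l_0$ of the input) and a ``tail,'' and claim the tail ``contributes an operator of norm $\le\delta$.'' The tail operator sends $T$ to the element of $\mathcal A$ whose Fourier coefficients are $c_{m+n+l}=(f\circ\phi^{n+l})(E_l(T)\circ\phi^n)g$ for $l\ge l_0$, and the only thing your argument gives is $\|c_{m+n+l}\|_{C_0(X)}\le\|f\circ\phi^{n+l}g\|\,\|T\|<\delta\|T\|$. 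But the norm of $\mathcal A$ is the concrete operator norm, not the $\ell^1$ norm: an element with uniformly small Fourier coefficients can have large norm, the partial-sum projections $T\mapsto\sum_{l<l_0}U^lE_l(T)$ are not uniformly bounded in $l_0$, and the bound $\bigl\|\sum_{l\ge l_0}U^{m+n+l}c_l\bigr\|\le\sum_{l\ge l_0}\|c_l\|$ would require summability of $\|f\circ\phi^{n+l}g\|$, which your Arzel\`a--Ascoli argument does not provide (it only gives $\|f\circ\phi^{n+l}g\|\to0$). So the ``tail is small'' assertion is unjustified, and it is precisely the crux of the converse. (A secondary quibble: a function in $C_0(X)$ vanishing on $X_a$ need not vanish identically, so your parenthetical justification that every limit point of $K$ is $0$ should rest on condition (2) alone --- the pointwise limit is $0$ everywhere, hence so is any uniform limit point; that part is salvageable.)

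The paper avoids this obstruction by arranging for the tail to vanish \emph{exactly} rather than approximately. First $f$ and $g$ are replaced by compactly supported truncations $f_k=v_{f_k}f$, $g_k=v_{g_k}g$ with $\|f-f_k\|,\|g-g_k\|\le 2/k$, so that $M_{A_k,B_k}$ approximates $M_{A,B}$ in operator norm within $4/k$; this reduces the problem to showing each $M_{A_k,B_k}$ is compact. Then equicontinuity (condition (3)) is used at each point of $\overline{U_k(g)}\cap X_{a,i}$ to produce neighbourhoods on which $f_k\circ\phi^{n+l}$ vanishes for \emph{all} $l$; what remains of $\overline{U_k(g)}\cap X_i$ is a finite set $I_k$ of isolated points, and one checks $M_{A_k,B_k}=M_{A_k,\tilde B_k}$ with $\tilde g_k=g_k\chi_{I_k}$ finitely supported. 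Finally condition (2), applied to the finitely many points of $I_k$ and the compact support of $f_k$, yields an $L_0$ with $f_k\circ\phi^{n+l}\tilde g_k\equiv0$ for all $l\ge L_0$; hence $M_{A_k,\tilde B_k}$ kills all high Fourier modes identically and maps into the finite-dimensional span of $\{U^{m+n+l}\chi_{\{y\}}:l<L_0,\ y\in I_k\}$, so it is finite rank. Your sketch correctly identifies the role of each hypothesis and of the points of $X_{a,i}$, but without a reduction of this kind (exact vanishing of the tail after truncation to a finite set of isolated points) the norm-limit-of-finite-rank argument does not close.
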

\begin{proof}
The forward direction is immediate by Lemmas \ref{cmo1}, \ref{cmo2} and \ref{cmo3}. We will show the opposite direction.

We will divide the proof in three steps:

\vspace{0.5em}
\textbf{1st step}

In this step we construct an approximation of $M_{A,B}$ by multiplication operators $M_{A_k, B_k}$
with the property that the Fourier coefficients
of $A_k, B_k$ are compactly supported.

 We define the following sets, for  $h\in C_0(X)$ and $k\in\mathbb N$.
 $$D_{k}(h)=\left\{x\in X:|h(x)|\ge\frac{1}{k}\right\},\ 
 U_{k}(h)=\left\{x\in X:|h(x)|>\frac{2}{3k}\right\}.$$ It is obvious that $D_{k}(h)\subseteq U_{k}(h)\subseteq D_{2k}(h)$ and that the set $\overline{U_{k}(h)}$ is compact.
 If $\|f\|\|g\|=0$, the proof is trivial. Otherwise, we choose a natural number  $k_s$, such that $\frac{1}{k_s}<\min\{\|f\|,\|g\|\}$. By Urysohn's lemma, 
 there are norm one functions $v_{f_k}$ and $v_{g_k}$ in $C_0(X)$, for $k>k_s$,  such that
\begin{eqnarray*}
v_{f_k}(x) =  \left\{\begin{tabular}{l} $1$ , $x\in D_k(f)$ \\ $0$ , $x\in X\setminus U_k(f)$\end{tabular}\right. 
&\text{and} &
v_{g_k}(x) =  \left\{\begin{tabular}{l} $1$ , $x\in D_k(g)$ \\ $0$ , $x\in X\setminus U_k(g)$\end{tabular}\right..
\end{eqnarray*}
We define the  functions
$f_{k}=v_{f_k}f ,\;   g_{k}=v_{g_k}g.$
It is immediate that $\|f-f_{k}\|\leq\frac{2}{k}$ and $\|g-g_{k}\|\leq\frac{2}{k}$. It follows that, if $A_k=U^mf_{k}$ 
and $B_k=U^ng_{k}$, we have that $\|A-A_k\|<\frac{2}{k}$ and $\|B-B_k\|<\frac{2}{k}$. Then, we can see that
\begin{equation*}
\sup_{T\in\mathcal A_1}\|M_{A,B}(T)-M_{A_k,B_k}(T)\|<\frac{4}{k}.
\end{equation*}
Hence, to prove that  $M_{A,B}$ is compact, 
it suffices to show that there exists a natural number $k_0$ such that  $M_{A_k,B_k}$ is compact, for all $k>k_0$.

\vspace{0.5em}
\textbf{2nd step}

1st case

 Firstly, we assume  that $\overline{U_{k'}(g)}\cap X_{a,i}\neq\emptyset$, 
for some $k' \in \mathbb Z_+$.
It follows that $\overline{U_{k}(g)}\cap X_{a,i}\neq\emptyset$, for all $k>k'$. 
Let $k_0=\max\{k_s,k'\}$ and $k>k_0$. If $\tilde x\in \overline{U_{k}(g)}\cap X_{a,i}$, there exists an open neighbourhood 
$V_{\tilde x}$ of $\tilde x$, such that $|g(x)|>\frac{1}{2k}$, for all $x\in V_{\tilde x}$.
Furthermore, $\tilde x$ is an accumulation point which in turn means that 
$(f\circ\phi^{n+l}g)(\tilde x)=0$, for all $l\in\mathbb{Z}_+$. Moreover, we recall that the family
$\{f\circ\phi^{n+l}g\}_{l\in\mathbb{Z}_+}$ is equicontinuous at $\tilde x$. 
Therefore, there exists an open neighbourhood $V_{\tilde x}'$ of $\tilde x$, such that 
$|(f\circ\phi^{n+l}g)(x)|<\frac{1}{4k^2}$, for all $x\in V_{\tilde x}'$ and $l\in\mathbb{Z}_+$. 
We set $W_{\tilde x}=V_{\tilde x}\cap V_{\tilde x}'$. It follows that
\begin{eqnarray*}
\frac{1}{4k^2} & > & |(f\circ\phi^{n+l}g)(x)|\\
&=& |f\circ\phi^{n+l}(x)||g(x)|\\
&\ge& |f\circ\phi^{n+l}(x)|\frac{1}{2k},
\end{eqnarray*}
and therefore, $|f\circ\phi^{n+l}(x)|\leq\frac{1}{2k}$, for all $x\in W_{\tilde x}$
and $l\in\mathbb{Z}_+$. We denote by $\cup_{\tilde x} W_{\tilde x}$ the set $\cup\{W_{\tilde{x}}\;|\; \tilde{x}\in\overline{U_k(g)}\cap X_{a,i}\}$. It follows that 
$\phi^{n+l}\left(\cup_{\tilde x} W_{\tilde x}\right)\subseteq X\setminus \overline{U_k(f)}$, for all $l\in\mathbb{Z}_+$ and hence $(f_{k}\circ\phi^{n+l}g_{k})(\cup_{\tilde x}W_{\tilde x})=\{0\}$.
Moreover, the set $\overline{U_{k}(g)}\setminus(\cup_{\tilde x}W_{\tilde x})$ is compact and 
$\left(\overline{U_{k}(g)}\setminus(\cup_{\tilde x}W_{\tilde x})\right)\cap X_{a,i}=\emptyset$, 
which in turn implies that the set $(\overline{U_{k}(g)}\setminus(\cup_{\tilde x}W_{\tilde x}))\cap X_i$ is finite.
We denote by $I_k$   the set $(\overline{U_{k}(g)}\setminus(\cup_{\tilde x}W_{\tilde x}))\cap X_i$ and 
by $\chi_{I_k}$ the  characteristic function of $I_k$.
We set $\tilde g_{k}=g_k\chi_{I_k}$.
We note that $(f\circ\phi^{n+l}g)(X_a)=\{0\}$, for all  $l\in\mathbb Z_+$ and hence  
$(f_k\circ\phi^{n+l}g_k)(X_a)=\{0\}$, for all  $l\in\mathbb Z_+$. Furthermore, 
we have proved that $(f_{k}\circ\phi^{n+l}g_{k})(\cup_{\tilde x}W_{\tilde x})=\{0\}$. Since the function $g_{k}$ is supported in
$U_k(g)$, we conclude that
\begin{equation*}
f_{k}\circ\phi^{n+l}g_{k}=f_{k}\circ\phi^{n+l}\tilde g_{k},
\end{equation*}
for all  $l\in\mathbb Z_+$.

%\vspace{0.5em}
2nd case

We assume now that 
$ \overline{U_{k}(g)}\cap X_{a,i}=\emptyset,\forall k\in\mathbb N$. 
 The set $\overline{U_{k}(g)}\cap X_i$ is finite, since the set $\overline{U_{k}(g)}$ is compact and $\overline{U_{k}(g)}\cap X_{a,i}=\emptyset$.
We   denote by the same letter as in the 1st case, $I_k$   the set $\overline{U_{k}(g)}\cap X_i$ and 
by $\chi_{I_k}$ the  characteristic function of $I_k$.
We set $\tilde g_{k}=g_k\chi_{I_k}$.
We note that $(f\circ\phi^{n+l}g)(X_a)=\{0\}$, for all  $l\in\mathbb Z_+$ and hence  
$(f_k\circ\phi^{n+l}g_k)(X_a)=\{0\}$, for all  $l\in\mathbb Z_+$.  Since $g_{k}$ is supported in
$U_k(g)$, we conclude that
\begin{equation*}
f_{k}\circ\phi^{n+l}g_{k}=f_{k}\circ\phi^{n+l}\tilde g_{k},
\end{equation*}
for all  $l\in\mathbb Z_+$. 
It follows that $M_{A_k,B_k}= M_{A_k,\tilde B_k}$, where 
$\tilde B_k=U^n\tilde g_{k}$.

\vspace{0.5em}
\textbf{3rd step}

It follows that 
$\lim_{l\rightarrow\infty}(f_k\circ\phi^{n+l}g_k)(y)=0$, for all $y \in I_k$, by assumption.
We observe that $\lim_{l\rightarrow\infty}(f_k\circ\phi^{n+l})(y)=0$, since $g_{k}(y)\neq0$, which follows from the inclusion $I_k\subseteq \overline{U_k(g_k)}$.
Therefore, there exists an $L_{0}\in\mathbb N$, such that 
$\phi^{n+l}\left(y\right)\in  (X\setminus \overline{U_{k}(f)})$,   for all $l\ge L_{0}$ and for all $y \in I_k$, since the set $I_k$ is finite.
Since $f_k$ is supported in $\overline{U_{k}(f)}$, we obtain  that $f_{k}\circ\phi^{n+l}\tilde g_{k}=0$, for all   $l\ge L_0$. 
It follows that 
\begin{equation*}
M_{A_k,\tilde B_k}(U^lh)=0,
\end{equation*}
for all $l \ge L_0$ and $h \in C_0(X)$.
Since  $\tilde g_{k}$ has   finite support, the operator  
$
M_{A_k,\tilde B_k},$ is a finite rank operator and hence compact.

\end{proof}

Let $A$ be an element of the semicrossed product $\mathcal{A}$.
We consider the sequence $\{U^nf_n\}_{n\in\mathbb{Z}_+}\subseteq\mathcal A$, where $f_n=E_n(A)$, for $n\in\mathbb{Z}_+$. 
We note that the series $\sum_{n\in\mathbb{Z}_+} U^nf_n$ does not converge to $A$ in general. 
The $k$th arithmetic mean of $A$ is defined to be the element $A_k=\frac{1}{k+1}\sum_{l=0}^k S_l$, 
where $S_l=\sum_{n=0}^l U^nf_n$. Then, the sequence $\{A_k\}_{k\in\mathbb{Z}_+}$ is norm convergent to $A$ \cite[p. 524]{peters}.

\begin{theorem} \label{main2}
Let $A,B\in \mathcal A$ and  $E_m(A)=f_m\in C_0(X)$, $E_m(B)=g_m\in C_0(X)$, for all $m\in\mathbb Z_+$. 
The following statements are equivalent.
\begin{enumerate}
\item The multiplication operator $M_{A,B}:\mathcal A\rightarrow \mathcal A$ is  compact.
\item The following assertions are valid, for all $m,n\in\mathbb Z_+$.
\begin{enumerate}
\item $(f_m\circ\phi^{n+l}g_n)(X_a)=\{0\}$, for all $l\in\mathbb Z_+$.
\item $\lim_{l\rightarrow\infty}(f_m\circ\phi^{n+l}g_n)(x)= 0$, for all $x\in X_i$.
\item The sequence $\{f_m\circ\phi^{n+l}g_n\}_{l\in\mathbb{Z_+}}$ is pointwise equicontinuous.
\end{enumerate}
\end{enumerate}
\end{theorem}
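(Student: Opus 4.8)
\emph{The plan.} The implication (1)$\Rightarrow$(2) is essentially already in hand: after rescaling $A$ and $B$ by positive scalars — which changes neither the compactness of $M_{A,B}$ nor the validity of the three conditions, each being stable under multiplying the $f_m$ and $g_n$ by nonzero constants — one may assume $A,B\in\mathcal A_1$, and then Lemmas \ref{cmo1}, \ref{cmo2} and \ref{cmo3} yield (2)(a), (2)(b) and (2)(c) respectively. So all the work is in proving (2)$\Rightarrow$(1).

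\emph{Step 1: reduction to polynomials.} Let $A_k,B_k$ be the $k$th arithmetic means of $A$ and $B$. By the fact recalled just above the statement, $A_k\to A$ and $B_k\to B$ in norm, so $\sup_k\|A_k\|<\infty$ and
\[
\|M_{A,B}-M_{A_k,B_k}\|\le\|A-A_k\|\,\|B\|+\|A_k\|\,\|B-B_k\|\longrightarrow 0
\]
as $k\to\infty$. Since the compact operators form a norm-closed subspace of $\mathcal B(\mathcal A)$, it suffices to show that every $M_{A_k,B_k}$ is compact. Now $E_n(A_k)=\frac{k+1-n}{k+1}f_n$ for $n\le k$ and $E_n(A_k)=0$ otherwise, and similarly for $B_k$; hence the Fourier coefficients of $A_k$ and $B_k$ are (constant) scalar multiples of those of $A$ and $B$, and so the pair $(A_k,B_k)$ again satisfies (2)(a)--(c). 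Thus we may assume from now on that $A=\sum_{m=0}^M U^mf_m$ and $B=\sum_{n=0}^N U^ng_n$ are polynomials satisfying (2).

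\emph{Step 2: splitting into monomials.} For polynomials, $M_{A,B}(T)=ATB=\sum_{m=0}^M\sum_{n=0}^N U^mf_m\,T\,U^ng_n$, so $M_{A,B}=\sum_{m,n}M_{U^mf_m,\,U^ng_n}$ is a finite sum, and it is enough that each term be compact. Discarding the trivial terms with $f_m=0$ or $g_n=0$, set $f=f_m/\|f_m\|$ and $g=g_n/\|g_n\|$, so that $U^mf,U^ng\in\mathcal A_1$ and $M_{U^mf_m,U^ng_n}=\|f_m\|\,\|g_n\|\,M_{U^mf,U^ng}$. Since $f\circ\phi^{n+l}g$ is a nonzero scalar multiple of $f_m\circ\phi^{n+l}g_n$, hypothesis (2) says precisely that the pair $(U^mf,U^ng)$ satisfies conditions (1), (2), (3) of Proposition \ref{compactelements}. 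Hence $M_{U^mf,U^ng}$ is compact by that proposition, and therefore so is $M_{U^mf_m,U^ng_n}$; summing over $m,n$ shows $M_{A,B}$ is compact.

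\emph{Where the difficulty is.} The substantive content lies behind us: in Proposition \ref{compactelements} — particularly the equicontinuity argument in its ``3rd step'' that reduces the relevant operator to finite rank — and in Lemma \ref{cmo3}. The only real subtlety in the present theorem is that one cannot simply expand $A$ as $\sum_n U^nf_n$ (that series need not converge to $A$), so the Cesàro means must be used for the reduction to polynomials; once that is done, the argument rests only on the elementary facts that a finite sum of compact operators is compact and that the conditions in (2) are insensitive to nonzero scalar factors.
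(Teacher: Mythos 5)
Your proposal is correct and follows essentially the same route as the paper: establish (1)$\Rightarrow$(2) via Lemmas \ref{cmo1}--\ref{cmo3} after normalizing to $\mathcal A_1$, and for the converse reduce to polynomials by Ces\`aro means (noting the coefficients of $A_k,B_k$ are scalar multiples of those of $A,B$, so condition (2) persists), split into monomials, and invoke Proposition \ref{compactelements}. The only difference is cosmetic — you handle the Ces\`aro reduction before the monomial splitting and spell out the norm estimates and scalar-invariance points that the paper leaves implicit.
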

\begin{proof}
It is sufficient to prove the theorem for $A, B \in \mathcal A_1$.

The condition (1) implies the condition (2) by Lemmas \ref{cmo1}, \ref{cmo2} and \ref{cmo3}. We will show the opposite direction.

If $A=\sum_{m=0}^pU^mf_m$ and $B=\sum_{n=0}^qU^ng_n$,  for $ p, q \in \mathbb Z_+$, we have 
 $$M_{A, B}=\sum_{m=0}^p\sum_{n=0}^qM_{U^mf_m, U^ng_n}$$  and the assertion folows 
from Proposition \ref{compactelements}.

If $A, B \in \mathcal A$  and 
 $k\in\mathbb{Z}_+$, we denote by 
$A_k$ and $B_k$ the $k$th arithmetic mean of $A$ and $B$ respectively. Since the Fourier coefficients of $A$ and $B$ 
satisfy the condition
$(2)$, the Fourier coefficients of $A_k$  and $B_k$ satisfy the condition $(2)$ as well.
Thus,  the operator   $M_{A_k, B_k}$ 
is compact, for all $k \in \mathbb Z_+$.
The operator $M_{A, B}$ is the  norm limit of the sequence  $\{M_{A_k, B_k}\}_{k\in\mathbb{Z}_+}$ and hence it is compact.

\end{proof}

As a corollary of the above theorem, we obtain the following characterization of the compact elements of the algebra $\mathcal A$.

%%%%%%%%%%%%%%%%%%%%%%%%%%%%%%%%%%%%%%%%%%%%%%%%%%%%%%%%%%%%%%%%%%%%%%%%%%%%%%%%%%%%%%%%

\begin{corollary} \label{compelements}
Let $A\in\mathcal{A}$ and $E_m(A)=f_m\in C_0(X)$, for all $m\in Z_+$. Then,
 $A$   is compact element of $\mathcal{A}$, if and only if the following conditions are satisfied, for all $m,n\in\mathbb{Z}_+$.
\begin{enumerate}
\item $(f_m\circ\phi^{n+l}f_n)(X_a)=\{0\}$, for all $l\in\mathbb Z_+$.
\item $f_m(X_r)=\{0\}$.
\item The sequence $\{f_m\circ\phi^{n+l}f_n\}_{l\in\mathbb{Z}_+}$ is pointwise equicontinuous.
\end{enumerate}
\end{corollary}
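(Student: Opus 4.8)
The plan is to deduce the corollary directly from Theorem \ref{main2} by taking $B=A$. By definition, $A$ is a compact element of $\mathcal A$ precisely when the operator $M_{A,A}\colon\mathcal A\to\mathcal A$ is compact, and Theorem \ref{main2} with $g_m=f_m$ says this holds if and only if, for all $m,n\in\mathbb Z_+$: (a) $(f_m\circ\phi^{n+l}f_n)(X_a)=\{0\}$ for all $l$; (b) $\lim_{l\to\infty}(f_m\circ\phi^{n+l}f_n)(x)=0$ for all $x\in X_i$; and (c) $\{f_m\circ\phi^{n+l}f_n\}_l$ is pointwise equicontinuous. Conditions (a) and (c) are verbatim conditions (1) and (3) of the corollary, so the whole issue is to show that, under the standing hypotheses (1) and (3), condition (b) may be traded for condition (2), $f_m(X_r)=\{0\}$. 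Since both ``compact element'' and the conditions (1)--(3) are unaffected by multiplying $A$ by a scalar, we may assume $A\in\mathcal A_1$.

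For the implication ``$M_{A,A}$ compact $\Rightarrow$ (2)'' I would invoke Lemma \ref{rl2} with $B=A$, which gives $(f_m\circ\phi^{n+l}f_n)(x)=0$ for all $m,n,l\in\mathbb Z_+$ and all $x\in X_r$. Fix $x\in X_r$ and a strictly increasing sequence $n_k$ with $\phi^{n_k}(x)\to x$; if $f_p(x)\neq 0$ for some $p$, then taking the middle index equal to $p$ forces $f_m$ to vanish on $\{\phi^{j}(x):j\ge p\}$ for every $m$, and since $n_k\ge p$ eventually, continuity forces $f_m(x)=0$ for all $m$, contradicting $f_p(x)\neq0$. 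Hence $f_m(X_r)=\{0\}$ for all $m$.

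The substantive direction is $(1)+(2)+(3)\Rightarrow(b)$. Fix $m,n$ and $x\in X_i$ and suppose the limit in (b) is not $0$: then there are a strictly increasing sequence $l_k$ and $\delta'>0$ with $f_n(x)\neq 0$ and $|f_m(\phi^{n+l_k}(x))|\ge\delta'$. By (2) the point $x$ is not periodic (a periodic point is recurrent), so the points $y_k:=\phi^{n+l_k}(x)$ are pairwise distinct; since $f_m\in C_0(X)$ they lie in the compact set $\{|f_m|\ge\delta'\}$, so after passing to a subsequence $y_k\to y$ with $|f_m(y)|\ge\delta'$, and $y\in X_a$ because it is a limit of distinct points. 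Now I would apply (1) and (3) to the sequence $\{f_m\circ\phi^{m+l}f_m\}_l$ at $y$: by (1) this sequence vanishes at $y$, so equicontinuity at $y$ yields a neighbourhood $V$ of $y$ with $|f_m(\phi^{m+l}(z))f_m(z)|<(\delta')^2/2$ for all $z\in V$ and all $l\ge 0$. Choosing $K$ with $y_K\in V$ and dividing by $|f_m(y_K)|\ge\delta'$ gives $|f_m(\phi^{m+l}(y_K))|<\delta'/2$ for every $l\ge 0$; but $\phi^{m+l}(y_K)=\phi^{n+l_k}(x)=y_k$ for $l:=l_k-m-l_K$, which is $\ge 0$ once $k$ is large, so $|f_m(y_k)|<\delta'/2$, contradicting $|f_m(y_k)|\ge\delta'$. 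This proves (b), and Theorem \ref{main2} then gives compactness of $M_{A,A}$.

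I expect the main obstacle to be this last implication: hypotheses (1)--(3) say nothing directly about the behaviour of $f_m$ along forward orbits of isolated points, which is exactly what (b) governs. The key device is to push an escaping orbit of an isolated point out to an accumulation point $y$, where (1) kills the relevant product and (3) upgrades this to uniform-in-$l$ smallness on an entire neighbourhood of $y$ — a neighbourhood which, since the orbit points accumulate at $y$, captures infinitely many of them and thereby forces the decay. Condition (2) is used only to rule out periodic $x$, which is precisely what guarantees that the limit point $y$ genuinely lies in $X_a$ rather than being a recycled orbit point.
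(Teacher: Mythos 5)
Your proposal is correct and follows essentially the same route as the paper: the forward direction derives condition (2) from Lemma \ref{rl2} together with recurrence and continuity, and the reverse direction reduces to condition (b) of Theorem \ref{main2} by pushing the orbit points $\phi^{n+l_k}(x)$ (pairwise distinct because (2) rules out periodicity) into the compact superlevel set of $f_m$, extracting an accumulation point, and using conditions (1) and (3) there to force a contradiction. The only cosmetic difference is that you need just one orbit point inside the equicontinuity neighbourhood rather than the paper's two, which slightly streamlines the final estimate.
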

\begin{proof}

It is sufficient to prove the corollary for $A \in \mathcal A_1$.

 Firstly, we will show the forward direction. The conditions (1) and (3) are satisfied by Theorem \ref{main2}.
 Let $x$ be a recurrent point. Then,
there exists a strictly increasing sequence $\{l_i\}_{i\in\mathbb{N}}\subseteq\mathbb Z_+$, such that 
$\lim_{i\rightarrow\infty}\phi^{m+l_i}( x)= x$, which implies that
$\lim_{i\rightarrow\infty}(f_m\circ\phi^{m+l_i}f_m)(x)=f_m^2(x). $ It follows from Lemma \ref{rl2} that $f_m(x)=0$.

Now, we will show the opposite direction. In view of Theorem \ref{main2}, it suffices to prove that 
 $\lim_{l\rightarrow\infty}(f_m\circ\phi^{n+l}f_n)(x)=0$, for all $m,n\in\mathbb Z_+$ and $x\in X_i$.
Let $m_0,n_0\in\mathbb Z_+$ and $x_0\in X_i$, such that $\lim_{l\rightarrow\infty}(f_{m_0}\circ\phi^{n_0+l}f_{n_0})(x_0)\neq 0$. 
Then, there exists a natural number $k_0$ and a strictly increasing sequence $\{l_i\}_{i\in\mathbb{N}}\subseteq\mathbb Z_+$, such 
that $|(f_{m_0}\circ\phi^{n_0+l_i}f_{n_0})(x_0)|\ge \frac{1}{k_0}$, for all $i\in\mathbb N$.  
Hence, $|(f_{m_0}\circ\phi^{n_0+l_i})(x_0)|\ge \frac{1}{k_0}$, for all  $i\in\mathbb N$,  which implies that 
$\phi^{n_0+l_i}(x_0)\in D_{k_0}(f_{m_0})=\{x\in X : f_{m_0}(x)\geq\frac{1}{k_0}\}$,  for all  $i\in\mathbb N$. By the condition (2), we obtain that  $x_0\in X_i\setminus X_r$ 
and hence, $\phi^{n_0+l_i}(x_0)\neq \phi^{n_0+l_j}(x_0)$, for $i\neq j$. Moreover, the set $D_{k_0}(f_{m_0})$ is compact and 
hence, there exists a point $\tilde x\in X_{a,i}\cap D_{k_0}(f_{m_0})$ and a subsequence 
$\{l_{i_j}\}_{j\in\mathbb{N}}$, such that $\lim_{j\rightarrow\infty}\phi^{n_0+l_{i_j}}(x_0)= \tilde x$. 
Let $W_{\tilde x}$ be  an open neighbourhood of $\tilde x$,
 such that $|f_{m_0}(x)|>\frac{1}{2k_0}$, for all $x\in W_{\tilde x}$.
By the condition (1), we have that $(f_{m_0}\circ\phi^{n_0+l}f_{m_0})(\tilde x)=0$, for all $l \in \mathbb N$ and 
by  (c) we have that the sequence $\{f_{m_0}\circ\phi^{n_0+l}f_{m_0}\}_{l\in\mathbb{Z}_+}$ is equicontinuous at $\tilde x$. 
Therefore, there is an open neighbourhood $W_{\tilde x}'$ of $\tilde x$, such
that $|(f_{m_0}\circ\phi^{n_0+l}f_{m_0})(x)|<\frac{1}{5k_0^2}$, for all $l \in \mathbb Z_+$ and $x\in W_{\tilde x}'$.
Let $W=W_{\tilde x}\cap W_{\tilde x}'$ and $j_1,j_2\in\mathbb N$ be 
such that $\phi^{n_0+l_{i_{j_1}}}(x_0),\phi^{n_0+l_{i_{j_2}}}(x_0)\in W$ and $l_{i_{j_2}}>l_{i_{j_1}}+n_0$. Then,
\begin{eqnarray*}
\frac{1}{5k_0^2}&\ge&|(f_{m_0}\circ\phi^{l_{i_{j_2}}-l_{i_{j_1}}}f_{m_0})(\phi^{n_0+l_{i_{j_1}}}(x_0))|\\
&=&|f_{m_0}(\phi^{n_0+l_{i_{j_2}}}(x_0))||f_{m_0}(\phi^{n_0+l_{i_{j_1}}}(x_0))|\ge\frac{1}{4k_0^2},
\end{eqnarray*}
which is a contradiction.
\end{proof}

Let us see now how Theorem \ref{main2} applies to two special cases. If $X$ is a discrete space or it has no isolated points, we obtain the following characterizations.

\begin{corollary}
Let $X$ be a discrete space, $A,B\in \mathcal A$ and  $E_m(A)=f_m,E_m(B)=g_m$, for all $m\in\mathbb Z_+$.
Then, the following are equivalent.
\begin{enumerate}
\item The multiplication operator $M_{A,B}:\mathcal A\rightarrow \mathcal A$ is  compact.
\item $(f_m\circ\phi^{n+l}g_n)(X_r)=\{0\}$, for all $m,n,l\in\mathbb Z_+$.
\item
$\lim_{l\rightarrow \infty}(f_m\circ\phi^{n+l}g_n)(x)= 0$, for all $x\in X .$

\end{enumerate}
\end{corollary}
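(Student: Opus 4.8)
The plan is to deduce this corollary directly from Theorem \ref{main2} by exploiting the structure of a discrete space. First I would observe that when $X$ is discrete, every point is isolated, so $X_a=\emptyset$ and $X_i=X$; consequently condition (2a) of Theorem \ref{main2} is vacuous and condition (2c) (pointwise equicontinuity) is automatic, since on a discrete space every function is continuous at every point and the singleton $\{x\}$ is itself a neighbourhood on which any function sequence is ``equicontinuous'' trivially. Thus, for discrete $X$, Theorem \ref{main2} collapses to the single surviving requirement: $\lim_{l\rightarrow\infty}(f_m\circ\phi^{n+l}g_n)(x)=0$ for all $m,n\in\mathbb Z_+$ and all $x\in X$. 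This already gives the equivalence of (1) and (3).

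Next I would establish the equivalence of (2) and (3). The implication (3) $\Rightarrow$ (2) is trivial: if a sequence of scalars tends to $0$ then in particular it vanishes on the recurrent set — more precisely, for $x\in X_r$ discrete one has $\phi^{k_0}(x)=x$ for some $k_0$ (a recurrent point in a discrete space is periodic), so $(f_m\circ\phi^{n+l}g_n)(x)$ is eventually periodic in $l$, and a periodic sequence tending to $0$ is identically $0$; hence $(f_m\circ\phi^{n+l}g_n)(X_r)=\{0\}$. For the converse (2) $\Rightarrow$ (3) I would fix $x\in X$, $m,n\in\mathbb Z_+$ and argue by contradiction: if $(f_m\circ\phi^{n+l}g_n)(x)\not\to 0$, there is $\epsilon>0$ and a strictly increasing sequence $l_i$ with $|(f_m\circ\phi^{n+l_i}g_n)(x)|>\epsilon$, forcing in particular $|g_n(x)|>0$ and $|f_m(\phi^{n+l_i}(x))|>\epsilon/\|g_n\|_\infty$ for all $i$. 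Since $f_m\in C_0(X)$ and $X$ is discrete, the set $\{y\in X:|f_m(y)|>\epsilon/\|g_n\|_\infty\}$ is finite, so the points $\phi^{n+l_i}(x)$ take only finitely many values; passing to a subsequence we may assume $\phi^{n+l_i}(x)=\phi^{n+l_1}(x)$ for all $i$, whence $\phi^{l_i-l_1}$ fixes $y:=\phi^{n+l_1}(x)$, so $y$ is (periodic, hence) recurrent, and similarly $x=\phi^{-n}\circ\phi^{n+l_1}(x)$... more directly, $\phi^{l_i-l_1}(\phi^{n+l_1}(x)) = \phi^{n+l_i}(x) = \phi^{n+l_1}(x)$ so $x$ itself is recurrent since $\phi^{n+l_1-n+\dots}$; in any case one locates a recurrent point at which $f_m$ or $f_m\circ\phi^{n+l}g_n$ fails to vanish, contradicting (2).

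I would present the argument in the order: (i) identify $X_a$, $X_i$ and verify (2a), (2c) of Theorem \ref{main2} are automatic for discrete $X$, yielding (1) $\Leftrightarrow$ (3); (ii) prove (3) $\Rightarrow$ (2) via periodicity of recurrent points in a discrete space; (iii) prove (2) $\Rightarrow$ (3) via the $C_0$ property and the finiteness of the relevant superlevel sets of $f_m$, extracting a recurrent point to contradict (2). The main obstacle is step (iii): one must carefully convert the failure of convergence into the existence of a point that is genuinely recurrent (equivalently, periodic here) and at which the relevant product is nonzero, being attentive to the shift by $n$ and to the fact that it is $f_m$, not the product, whose superlevel set is finite — so one first extracts a finite-range subsequence of the orbit $(\phi^{n+l_i}(x))_i$ and only then produces the periodicity. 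The rest is bookkeeping with the already-established Theorem \ref{main2} and Lemma \ref{rl2}.
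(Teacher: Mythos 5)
Your proof is correct and takes essentially the same route as the paper: Theorem \ref{main2} collapses on a discrete space (since $X_a=\emptyset$ and equicontinuity is trivial at isolated points) to give $(1)\Leftrightarrow(3)$, and the key step $(2)\Rightarrow(3)$ rests on the same observation in both proofs, namely that the $C_0$ condition forces superlevel sets of $f_m$ to be finite, so a non-vanishing limit forces the orbit $\phi^{n+l_i}(x)$ to revisit a point and hence $x$ to be periodic, i.e.\ recurrent. The only cosmetic difference is that you close the cycle by proving $(3)\Rightarrow(2)$ directly from periodicity of recurrent points in a discrete space, whereas the paper obtains $(1)\Rightarrow(2)$ from Lemma \ref{rl2}; both are valid.
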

\begin{proof}
We will show the implication $(2)\Rightarrow (3)$.
Assume  that there exist  $m, n \in \mathbb Z_+$  and 
$x\in X\setminus X_r$, such that $\lim_{l\rightarrow\infty}(f_m\circ\phi^{n+l}g_n)(x)\neq0$. 
Then, there exist an  $\epsilon >0$ and a strictly increasing  sequence $\{l_i\}_{i\in\mathbb{N}}\subseteq\mathbb Z_+$, such that
$|(f_m\circ\phi^{n+l_i}g_n)( x)|\ge\epsilon$, for all $i\in\mathbb N$.
Moreover, $ x\notin X_r$ and therefore  
$\phi^{n+l_i}( x)\neq \phi^{n+l_j}( x)$, for $i\neq j$. This is a 
contradiction, since $f_m \in C_0(X)$.

The implication $(3)\Rightarrow (1)$ follows from Theorem  \ref{main2} and 
the implication $(1)\Rightarrow (2)$ follows from Lemma  \ref{rl2}.
\end{proof}

\begin{corollary}
Let $X$ be a space without isolated points, $A,B\in \mathcal A$ and  $E_m(A)=f_m,E_m(B)=g_m$, for all $m\in\mathbb Z_+$.
Then, the following are equivalent.
\begin{enumerate}
\item The multiplication operator $M_{A,B}:\mathcal A\rightarrow \mathcal A$ is  compact.
\item $(f_m\circ\phi^{n+l}g_n)(X)=\{0\}$ for all $l,m,n\in\mathbb N$.
\item $M_{A,B}=0$.
\end{enumerate}
\end{corollary}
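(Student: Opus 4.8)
The starting point is the observation that, since $X$ has no isolated points, $X_i=\emptyset$ and every point of $X$ is an accumulation point, i.e.\ $X_a=X$. The plan is to establish the cycle $(1)\Rightarrow(2)\Rightarrow(3)\Rightarrow(1)$. For $(1)\Rightarrow(2)$ I would simply invoke Lemma~\ref{cmo1} (equivalently, condition (2a) of Theorem~\ref{main2}): compactness of $M_{A,B}$ gives $(f_m\circ\phi^{n+l}g_n)(X_a)=\{0\}$ for all $m,n,l$, and since $X_a=X$ this is exactly condition (2). The implication $(3)\Rightarrow(1)$ is immediate, as the zero operator is compact.

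The one implication with any content is $(2)\Rightarrow(3)$. Here I would first note that, by definition of $\mathcal A$, the linear span of the monomials $U^lh$ with $l\in\mathbb Z_+$ and $h\in C_0(X)$ is dense in $\mathcal A$, so it suffices to show $M_{A,B}(U^lh)=0$ for all such $l,h$. For a polynomial $A=\sum_{m=0}^pU^mf_m$ and $B=\sum_{n=0}^qU^ng_n$, two applications of the covariance relation $U^af\,U^bg=U^{a+b}(\alpha^b(f)g)=U^{a+b}(f\circ\phi^b\,g)$ give
\[
A\,(U^lh)\,B=\sum_{m=0}^{p}\sum_{n=0}^{q}U^{m+n+l}\bigl((f_m\circ\phi^{n+l})\,(h\circ\phi^n)\,g_n\bigr),
\]
and every summand vanishes because $f_m\circ\phi^{n+l}g_n\equiv 0$ by condition (2); hence $M_{A,B}=0$ on a dense set, so $M_{A,B}=0$. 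For general $A,B\in\mathcal A$ I would pass to the arithmetic means $A_k,B_k$, which are polynomials converging in norm to $A,B$ and whose Fourier coefficients are scalar multiples of the $f_m$ and $g_n$; thus they still satisfy condition (2), so $M_{A_k,B_k}=0$ by the polynomial case, and $M_{A,B}=\lim_k M_{A_k,B_k}=0$.

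I do not expect a genuine obstacle: the corollary is essentially the degenerate instance of Theorem~\ref{main2} in which condition (2a), now holding on all of $X$, forces every product $f_m\circ\phi^{n+l}g_n$ to be identically zero, making conditions (2b) (vacuous, as $X_i=\emptyset$) and (2c) (a constant zero family is trivially equicontinuous) automatic. The only thing to be careful about is the reduction from general $A,B$ to polynomials via the arithmetic means, together with the elementary but necessary bookkeeping in the two uses of the multiplication rule above.
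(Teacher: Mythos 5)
Your proposal is correct and is essentially the paper's argument: the paper simply states that the corollary follows immediately from Theorem~\ref{main2}, and your write-up is the natural unpacking, using condition (2a) with $X_a=X$ for $(1)\Rightarrow(2)$ and the covariance relation on monomials (plus density and the arithmetic means) to get $(2)\Rightarrow(3)$. No gaps.
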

\begin{proof}
 It follows immediately from Theorem \ref{main2}.
\end{proof}

The next example shows that the equicontinuity condition can not be omitted in general.

\begin{example}
We consider the dynamical system $(X,\phi)$ where
\begin{eqnarray*}
X=\{0\}\cup\{x_n\}_{n\in\mathbb Z}\cup \{2\}, &  & x_n=\left\{\begin{matrix}\frac{1}{|n|+1} & \hspace{-1em}, & n<0 \\ 2-\frac{1}{n+1} & \hspace{-1em}, & n\ge 0\end{matrix}\right.
\end{eqnarray*}
and $\phi$ is the homeomorphism
\begin{eqnarray*}
\phi(0)=0 , & \phi(x_n)=x_{n-1} , & \phi(2)=2.
\end{eqnarray*}
We define the  monomials $A=U^1f$ and $B=U^1g$ of the semicrossed product $\mathcal{A}$ by the following formulae.
\begin{eqnarray*}
f(x)=\left\{\begin{matrix} 1, &  &\hspace{-0.8em} x=1 \\ 0, &  &\hspace{-1.5em} \text{else}\end{matrix}\right. & \text{and} & g(x)=\left\{\begin{matrix} 1, &  &\hspace{-1em} x> 1 \\ 0, &  &\hspace{-1em} x\leq 1\end{matrix}\right..
\end{eqnarray*}
We observe that, $(f\circ\phi^{1+l}g)(X_a)=\{0\}$, for all $l\in\mathbb Z_+$ and 
$\lim_{l\rightarrow\infty}(f\circ\phi^{1+l}g)(x)=0$, for all $x\in X_i$.
However, the sequence $\{f\circ\phi^{1+l}g\}_{l\in\mathbb{Z}_+}$ is not equicontinuous at $x=2$ and the multiplication operator $M_{A,B}:\mathcal{A}\rightarrow \mathcal{A}$ is not compact. 
\end{example}

%%%%%%%%%%%%%%%%%%%%%%%%%%%%%%%%%%%%%%%%%%%%%%%%%%%%%%%%%%%%%%%%%%%%%%%%%%%%%%%%%%%%%%%%%%%%%%%

%%%%%%%%%%%%%%%%%%%%%%%%%%%%%%%%%%%%%%%%%%%%%%%%%%%%%%%%%%%%%%%%%%%%%%%%%%%%%%%%%%%%%%%%%%%%%%

In the following theorem, we characterize the ideal generated by the set of 
compact elements of the semicrossed product $\mathcal{A}$. Recall that  $Y\subseteq  X$ is called wandering if the sets 
$\phi^{-1}(Y), \phi^{-2}(Y), ...$ are pairwise disjoint. If $\phi$ is a homeomorphism, this condition is equivalent to
the condition that $\phi^m(Y)\cap \phi^n(Y)=\emptyset$ for all $m, n \in \mathbb Z_+, m\neq n$. 
A point $x \in X$ is called wandering if it possesses an open wandering neighborhood. Otherwise it is called non wandering. If $x$ 
is a non wandering point and $W$ is an open neighbourhood of $x$, then  there exists $m \in \mathbb N$, such 
that $W\cap \phi^m(W)\neq \emptyset$. Note that we may assume that $m$ is arbitrarily large \cite[p. 129]{kh}.
We will denote by $X_w$ the set of wandering points of $X$. It is clear that $X_w$  is 
the the union of all open wandering subsets of $X$.

\begin{theorem}\label{cmo31}
 The ideal generated by the compact elements of $\mathcal{A}$ is 
 the set 
 $$
 \{A\in\mathcal{A}\; |\; E_n(A)(X\setminus X_w)=\{0\},\; \forall n\in\mathbb{Z_+}\ \text{and } E_0(A)(X_a)=\{0\}\}.$$
\end{theorem}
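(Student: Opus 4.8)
The plan is to prove the two inclusions separately, writing $\mathcal J$ for the ideal generated by the compact elements of $\mathcal A$ and $\mathcal I$ for the set on the right-hand side. The first observation is that $\mathcal I$ is indeed a closed two-sided ideal of $\mathcal A$: the conditions $E_n(A)(X\setminus X_w)=\{0\}$ and $E_0(A)(X_a)=\{0\}$ are preserved under the module actions of $U^k h$ on either side (using that $X_w$ is $\phi$-invariant and open, and that $E_n(U^kh\cdot A)$ is, up to a composition with a power of $\phi$, a product of $h$ with some $E_j(A)$), and they are preserved under norm limits because each $E_n$ is continuous and the sets involved are fixed. Since $\mathcal J$ is by definition the smallest ideal containing every compact element, it suffices to show (i) every compact element of $\mathcal A$ lies in $\mathcal I$, and (ii) every element of $\mathcal I$ lies in $\mathcal J$.

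For (i), let $A$ be a compact element with $f_m=E_m(A)$. By Corollary \ref{compelements} we have $f_m(X_r)=\{0\}$ for all $m$, and $(f_m\circ\phi^{n+l}f_n)(X_a)=\{0\}$ for all $m,n,l$; taking $m=n$, $l=0$ gives $f_m^2(X_a)=\{0\}$, hence $E_m(A)(X_a)=\{0\}$ for all $m$, in particular for $m=0$. It remains to check $f_m(X\setminus X_w)=\{0\}$. If $x_0$ is a non-wandering point with $|f_m(x_0)|>0$, choose an open neighbourhood $W$ of $x_0$ on which $|f_m|$ is bounded below by some $\delta>0$; since $x_0$ is non-wandering we may pick $l$ arbitrarily large with $W\cap\phi^{l}(W)\neq\emptyset$, say $y\in W$ with $\phi^{-l}(y)\in W$. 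Then $|f_m(\phi^{l}(\phi^{-l}(y)))|\,|f_m(\phi^{-l}(y))|=|f_m(y)|\,|f_m(\phi^{-l}(y))|\ge\delta^2$, so $|(f_m\circ\phi^{m+l'}f_m)(z)|\ge\delta^2$ for a suitable $z$ and arbitrarily large $l'$ (absorbing the fixed shift $m$ into the choice of points). This contradicts the equicontinuity of $\{f_m\circ\phi^{m+l}f_m\}_{l}$ together with the fact that this sequence vanishes at accumulation points: one runs exactly the argument in the proof of Corollary \ref{compelements}, extracting from the relevant bounded $\phi$-orbit an accumulation point $\tilde x$ with $|f_m(\tilde x)|>0$, where equicontinuity and the vanishing at $\tilde x$ force the product to be small, contradicting the lower bound $\delta^2$. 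Hence $A\in\mathcal I$.

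For (ii), since $\mathcal I$ is a closed ideal and the arithmetic means $A_k$ of $A\in\mathcal I$ converge to $A$ and also lie in $\mathcal I$ (their Fourier coefficients are among the $f_n$), it suffices to treat $A=U^n f$ with $f(X\setminus X_w)=\{0\}$, and additionally $f(X_a)=\{0\}$ when $n=0$. The idea is to write such an $A$ as a product of two compact elements, or more precisely to show $U^nf\in\mathcal J$ directly. Because $X_w$ is a union of open wandering sets and $f\in C_0(X)$ is supported (up to arbitrarily small norm error, via the cut-off functions $v_{f_k}$ of Proposition \ref{compactelements}) on a compact subset of $X_w$, one can cover $\D(f)\cap\overline{U_k(f)}$ by finitely many open wandering sets $Y_1,\dots,Y_r$ and, using a partition of unity, reduce to $f$ supported in a single open wandering set $Y$. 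For such $f$, choose by Urysohn a function $h\in C_0(X)$ with $h\equiv 1$ on $\overline{U_k(f)}$ and $\D(h)\subseteq Y$; then $U^0 h$ (resp. $U^n h$) is a monomial whose single Fourier coefficient is supported in the wandering set $Y$, and one checks via Proposition \ref{compactelements} that $M_{U^0h,\,U^0h}$ is compact — conditions (1) and (2) there hold because $\D(h)\subseteq Y$ is wandering, so $\phi^{l}(Y)\cap Y=\emptyset$ for $l\ge1$ forces $h\circ\phi^{l}h\equiv 0$ for $l\ge1$ and the product lives only at $l=0$ where it is supported in $Y\cap X_w$, disjoint from recurrent points; and (3) holds because the sequence is eventually zero — so $U^0h$, hence $U^nh=U^0h\cdot U^n(\text{unit-like element})$, represents a compact element, and finally $U^nf=(U^0 h)(U^0 h)(U^n f)\in\mathcal J$ after adjusting supports. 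The $n=0$ case requires the extra hypothesis $f(X_a)=\{0\}$ precisely so that the single Fourier coefficient of $A$ itself satisfies the accumulation-point condition of Corollary \ref{compelements}, which is what lets us realize $A$ as a compact element rather than merely as a product landing in the ideal.

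The main obstacle I anticipate is the direction (ii): one must carefully arrange, using the cut-offs from Proposition \ref{compactelements} and a partition of unity subordinate to a finite cover of a compact set by open wandering neighbourhoods, that a coefficient supported in $X_w$ can be "factored through" a compact element without destroying the support condition, and one must keep track of the distinguished role of $E_0$ (where the accumulation-point constraint is genuinely needed, mirroring the asymmetry already visible in Corollary \ref{compelements}). The verification that $\mathcal I$ is closed and two-sided is routine but should be spelled out since it is what reduces the whole statement to the two membership checks.
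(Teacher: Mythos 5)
Your overall strategy matches the paper's: show every compact element satisfies the two vanishing conditions, then show every monomial $U^nf_n$ with $f_n$ vanishing on $X\setminus X_w$ (and on $X_a$ when $n=0$) lies in the ideal by factoring through monomials whose coefficient has wandering support. Part (i) is essentially the paper's argument, but two points need tightening. First, your deduction that $f_m^2(X_a)=\{0\}$ for \emph{all} $m$ from condition (1) of Corollary \ref{compelements} with $m=n$, $l=0$ is wrong for $m>0$: that choice gives $(f_m\circ\phi^{m})f_m$, not $f_m^2$, and indeed compact elements such as $U^1h$ with $\D(h)$ contained in a wandering open set meeting $X_a$ have $E_1$ not vanishing on $X_a$. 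Only the case $m=0$ is valid, and only that case is needed. Second, in the non-wandering argument you must first rule out that $x_0$ is isolated (a non-wandering isolated point is periodic, hence recurrent, where $f_m$ vanishes by Corollary \ref{compelements}); only then is $x_0\in X_a$, so that $(f_m\circ\phi^{m+l}f_m)(x_0)=0$ for all $l$ and equicontinuity \emph{at $x_0$ itself} yields the neighbourhood on which all the products are uniformly small. Your appeal to ``extracting an accumulation point from the orbit'' is not the mechanism that makes this work.

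The genuine gap is in part (ii). You claim that $M_{U^0h,U^0h}$ is compact whenever $\D(h)$ is contained in an open wandering set $Y$, justifying condition (1) of Proposition \ref{compactelements} by saying the $l=0$ term is ``supported in $Y\cap X_w$, disjoint from recurrent points.'' But condition (1) concerns accumulation points, not recurrent points: at $l=0$ the product is $h^2$, and an open wandering set can consist entirely of accumulation points (e.g.\ $X=\mathbb R$, $\phi(x)=x+1$, $Y=(0,1/2)$), in which case $h^2(X_a)\neq\{0\}$ and $U^0h$ is \emph{not} a compact element --- consistently with the theorem, since its zeroth coefficient fails to vanish on $X_a$. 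Consequently the factorization $U^nf=(U^0h)(U^0h)(U^nf)$ does not place $U^nf$ in the ideal. The repair is exactly the paper's dichotomy: for $n>0$ use $U^nh$ itself, which is compact because $n+l\ge 1$ together with wandering forces $h\circ\phi^{n+l}h\equiv 0$ for every $l\ge 0$ (so no $X_a$-condition on $h$ is needed), and factor $U^n(\psi_i f_k)=(U^nh_i)(U^0(\psi_i f_k))$ after your partition of unity; for $n=0$ impose additionally $h(X_a)=\{0\}$ (the paper's class $\mathcal F$), which is available because $E_0(A)$ is assumed to vanish on $X_a$. Also avoid the step ``$U^nh=U^0h\cdot U^n(\text{unit-like element})$'': $\mathcal A$ has no unit and $U^n$ alone is not an element of $\mathcal A$.
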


\begin{proof}
Let $A$ be a compact element of $\mathcal{A}$.
We will show that  
$E_n(A)(X\setminus X_w)=\{0\}$, for all $n\in\mathbb{Z_+}$ and $
E_0(A)(X_a)=\{0\}$.
 Let us  denote by $f_n$ the $n$th Fourier coefficient, $E_n(A)$, of $A$, for all $n\in\mathbb{Z}_+$.  
We suppose that there exists an $n_0\in\mathbb Z_+$ and a point $x_0\in X\setminus X_{w}$, such that $f_{n_0}(x_0)\neq 0$. 
Without loss of generality, we assume that $|f_{n_0}(x_0)|=1$.
We note that by Corollary \ref{compelements}, $x_0$ cannot be isolated,  since  a non wandering  isolated 
point is periodic and hence recurrent.
Thus, $x_0$ is an accumulation point, and we have $(f_{n_0}\circ\phi^{n_0+l}f_{n_0})(x_0)=0$, 
for all $l\in\mathbb Z_+$. Let $0<\epsilon<1/4$. The sequence $\{f_{n_0}\circ\phi^{n_0+l}f_{n_0}\}_{l\in\mathbb{Z}}$ 
is equicontinuous at 
$x_0$ and therefore, there exists an open neighborhood $U$ of $x_0$, such that $|(f_{n_0}\circ\phi^{n_0+l}f_{n_0})(x)|<\epsilon$, for all $x\in U$ and $l\in\mathbb Z_+$.
There exists an open neighborhood $V$ of $x_0$, such that $|f_{n_0}(x)|>1-\epsilon$, for all $x\in V$.
Let $W=U\cap V$. Since $x_0$ is a non wandering point, there exists an $l_0\in\mathbb Z_+$ such that $l_0\ge n_0$ 
 and $W\cap\phi^{l_0}(W)\neq\emptyset$. 
Let $y\in W$, such that $\phi^{l_0}(y)\in W$. Then,
\begin{eqnarray*}
\epsilon>|(f_n\circ\phi^{l_0}f_n)(y)|=|f_n(\phi^{l_0}(y))||f_n(y)|>(1-\epsilon)^2
\end{eqnarray*}
which is a contradiction. Furthermore, it is evident from Corollary \ref{compelements}, that $f_0(X_a)=\{0\}$. 
We observe that the conditions $f_n(X\setminus X_w)=\{0\}$,  for all $n\in\mathbb{Z}_+$ and $f_0(X_a)=\{0\}$, 
we have already proved for a compact element $A$, are satisfied by the elements of the ideal generated by the 
compact elements as well.

To complete the proof, we will prove that if $A \in \mathcal A$ satisfies 
$E_n(A)(X\setminus X_w)=\{0\}$, for all $n\in\mathbb{Z_+}$\ \text{and } $E_0(A)(X_a)=\{0\}$, then $A $  belongs to
the ideal generated by the 
compact elements.
We  denote by $f_n$ the $n$th Fourier coefficient $E_n(A)$ of $A$, for all $n\in\mathbb{Z}_+$.  
It is sufficient to show that  $U^nf_n$ 
belongs to the ideal generated by the 
compact elements, $\forall n\in\mathbb{Z_+}$.

First,  we  consider  the case $n>0$.
Let $\mathcal C$ be the set
 $\{h\in C_0(X): \D(h)=h^{-1}(\mathbb C\smallsetminus\{0\}) \textrm{ is wandering}\}$. 
 Let $h \in \mathcal C$.
 Since $\D(h)$ is wandering and $n>0$, the functions 
 $(h\circ\phi^{n+l}h)$ are identically $0$, for every $l\in\mathbb Z_+$. Hence, $U^nh$ satisfies  the conditions (a)  and (c)
 of Corollary \ref{compelements}. Since $X_r$  is contained in $X\setminus X_w$  condition (c)  is also satisfied and $U^nh$  
 is a compact element of $\mathcal A$.
 The norm closed algebra generated by $\mathcal C$, 
 is the  ideal
$\{f\in C_0(X): f(x)=0,\;  \forall x\in X\setminus X_w\}$ of $C_0(X)$. In particular, $f_n$ belongs to this algebra.
We conclude that $U^nf_n $  belongs to the ideal generated by the 
compact elements.

We consider now the case $n=0$. The proof is the similar to the proof in the case $n>0$, considering the set
$\mathcal F=\{h\in C_0(X):h(X_a)=\{0\}, \D(h)=h^{-1}(\mathbb C\smallsetminus\{0\}) \textrm{ is wandering}\}$ instead of $\mathcal C$.

\end{proof}

\bibliographystyle{amsplain}

\end{document}